\newcommand{\N}{\mathbb{N}}
\newcommand{\R}{\mathbb{R}}
\newcommand{\Proj}{\mathcal{P}}
\DeclareMathOperator{\range}{ran}
\DeclareMathOperator{\krn}{ker}
\newcommand{\argmin}{\mathop\mathrm{arg\,min}}
\newcommand{\dup}[2]{#1 #2}                      % as a product, i.e. f x
\newtheorem{theorem}{Theorem}[section]
\newtheorem{lemma}[theorem]{Lemma}           %%Delete [thm] to re-start numbering
\newtheorem{proposition}[theorem]{Proposition} %%Delete [thm] to re-start numbering
\newtheorem{assumption}[theorem]{Assumption}
\newtheorem{example}[theorem]{Example}
\newtheorem{remark}[theorem]{Remark}
\newcommand{\cf}{cf.\ }
\title{A primal dual projection algorithm for efficient constraint preconditioning}
\author{Anton Schiela, Matthias St\"ocklein, Martin Weiser}
\begin{document}
 \maketitle

\begin{abstract}
We consider a linear iterative solver for large scale linearly constrained quadratic minimization problems that arise, for example, in optimization with PDEs. By a primal-dual projection (PDP) iteration, which can be interpreted and analysed as a gradient method on a quotient space, the given problem can be solved by computing sulutions for a sequence of constrained surrogate problems, projections onto the feasible subspaces, and Lagrange multiplier updates. As a major application we consider a class of optimization problems with PDEs, where PDP can be applied together with a projected cg method using a block triangular constraint preconditioner. Numerical experiments show reliable and competitive performance for an optimal control problem in elasticity. 

\vspace{2ex}

   \noindent\textbf{AMS MSC 2020}: 65F10, 35Q93, 35Q93
    
  \noindent\textbf{Keywords}: iterative linear solvers, PDE constrained optimization, saddle point problems
\end{abstract}

	\section{Introduction}
	
	A large class of partial differential equations and PDE constrained optimization problems can be written as constrained optimization problems in function space of the form
	\[
	\min_{x\in X} J(x) \mbox{ s.t. } x \in \mathcal V,
	\]
	where $J : X \to \R$ is some functional and $\mathcal V \subset X$ a submanifold of the function space $X$ (with dual space $X^*$), given, e.g., by an equality constraint. For example, in structural or fluid mechanics $\mathcal V$ may describe a linear or non-linear incompressibility constraint or a coupling condition. Similarly, mixed formulations can often we written as variational problems subject to constraints. In PDE constrained optimization, $\mathcal V$ may be given by the pairs $(y,u)$ of states and controls that satisfy a given state equation. 
	
	Solving this problem by an SQP-type method involves linearization, i.e., local approximation of $J$ by a quadratic function $q:X\to \R$ and a local approximation of $\mathcal V$ by an affine subspace $x_0+V=x_0+T_{x_0}\mathcal V$. Also the treatment of inequality constraints, as present, e.g., in contact problems, by an active set method finally yields a minimization problem on an affine subspace. 
	
    In all these cases, we have to solve quadratic minimization problems of the form: 
	\begin{equation}\label{eq:abstract-problem}
	\min q(x) \mbox{ s.t. } x \in x_0+V \subset X,
	\end{equation}
	where $q$ is a quadratic function on a Hilbert space $(X,\langle\cdot,\cdot\rangle)$, $V$ is a closed linear subspace of $X$. We are particularly interested in the case, where $X$ is an infinite dimensional function space or a high dimensional finite element space, so that~\eqref{eq:abstract-problem} is large scale and iterative solvers should be employed. 	For the sake of brevity we use the same notation for the continuous and discretized spaces and operators. 
 
	If $V+x_0$ is given by a linear equation of the form $Cx+c$ then \eqref{eq:abstract-problem} can be written as a linear system of equations, called a saddle-point problem, which is of the form:
	\begin{equation}\label{eq:saddle-point}
	    \begin{pmatrix}
	                M & C^*\\
	                C & 0
	    \end{pmatrix}
	    \begin{pmatrix}
	                x\\
	                p
	    \end{pmatrix}+
	    \begin{pmatrix}
	                q'(0)\\
	                c
	    \end{pmatrix}=0.
	\end{equation}
	with $M=q''(0):X\to X^*$ symmetric. If $M$ is elliptic on $V=\ker C$, and $C:X\to P^*$ surjective onto a reflexive space $P$ with adjoint $C^*: P\to X^*$ then \eqref{eq:abstract-problem} and \eqref{eq:saddle-point} are uniquely solvable, and the solutions coincide. 
	In the last decades many approaches for solving and preconditioning these systems have been developed. 	For an extensive overview of solution algorithms for saddle point problems, we refer to \cite{benzi_golub_liesen_2005}. 
	Among the most well known approaches are reduction to the Schur complement (Uzawa method, \cite{Uzawa1958}), projection to the nullspace of $C$ (projected conjugate gradients) and Krylov-space methods for symmetric indefinite problems (MINRES, \cite{paige1975solution}). Depending on the type of application and the structure of the building blocks, each of these approaches has their specific field of application and  can be equipped with specialized preconditioners. 
	
	If $q$ is elliptic on $V$, then, at least in theory, a preconditioned conjugate gradient method, \cf \cite{HestenesStiefel:1952}, which is well defined on Hilbert spaces, can be used to solve~\eqref{eq:abstract-problem}. An implementation may use a basis of $V$, if available, and solve a reduced problem. In large-scale settings, however, $V$ is often very high-dimensional, in which case this approach is not feasible. 
	Alternatively, if a linear projection $\Proj: X\to V$ (also called a \emph{constraint preconditioner}) can be implemented, we can use projected search directions. This yields the projected preconditoned cg-method (ppcg) \cite{gould2001solution}.
	
	The distinguishing feature of ppcg among other solvers, like MINRES, is that it preserves the structure of a constrained optimization problem and can detect non-convexities of the subproblem.  
	However, as a draw-back, the projection onto $V$ has to be applied exactly, or to be more accurate, $\Proj $ has to remain constant during the iteration, and $q$ is minimized on $\tilde V := \mathrm{ran}\, \Proj $. In general, a cg-method does not allow inexact application of preconditioners, which would perturb orthogonality of search directions and reduce the robustness of cg significantly. This restriction can render constraint preconditioners computationally too expensive to be used inside a cg method. 	For example, in PDE constrained optimization, each application of a constraint preconditioner of the form ~\ref{eq:exact-constraint-preconditioner}, see below, requires the accurate solution of two linear (discretized) PDEs.
	
	One remedy has been proposed in~\cite{Ridzal:2014}, where a ppcg method with inexact preconditioning was used inside an SQP method. Here robustness was retained by a long term orthogonalization scheme inside cg. In comparison, MINRES requires only a preconditioner for these PDEs. Thus, in the last years a lot of research has been conducted towards the construction of efficient preconditioners for PDE constrained optimization together with MINRES~\cite{doi:10.1137/15M1018502,e729039adfe3490bb208102c48830119,rees2010preconditioning,doi:10.1137/16M1093021}. 
	
	In this work we consider iterative solvers for the solution of problem \eqref{eq:abstract-problem}, which are based on ppcg but overcome its difficulty, decribed above. Our main idea is to replace $V$ by an easier to handle subspace $\tilde V$,
	solve \eqref{eq:abstract-problem} with $V$ replaced by $\tilde V$, and employ back-projections onto $V$. In addition, a Lagrange multiplier update is employed in the style of an augmented Lagrangian method. In the context of PDE constrained optimization, which is our main focus of application, we will see that this allows us to replace the exact PDE solves, needed by ppcg, by application of much cheaper preconditioners.

		\section{Projected pcg for optimal control of linear PDEs}
    As a motivating example and major application we consider the iterative solution of an optimal control problem, subject to a linear PDE in abstract form:
	\begin{equation}
	\label{linearQuadraticOptimalControlProblem}
	\begin{gathered}
	\min_{(y,u) \in Y \times U} J(y,u) := \frac{1}{2} \| y -y_{d} \|_{G}^2 + \frac{\nu}{2} \Vert u-u_d \Vert^2_{U} \\
	s.t. \,\, Ay -Bu =0 \in P^*.
	\end{gathered}
	\end{equation}
	Here, $(Y,\|\cdot\|_Y)$ is a Banach space, $\|\cdot\|_G$ is a semi-norm on $Y$, induced by a positive semi-definite symmetric bilinear form $\langle \cdot,\cdot\rangle_G$, such that $\|v\|_G \le c\|v\|_Y$ for all $v\in Y$. Typically, $Y$ is some Sobolev space and $\|\cdot\|_G$ is an $L_2$-norm. Further,  $(U,\langle\cdot,\cdot\rangle_U)$ is a Hilbert space. 
	
	$P$ is assumed to be a reflexive space and identified with its bidual. The linear surjective operator $A: Y \rightarrow P^*$, which models a differential operator, is assumed to be continuously invertible and $B: U \rightarrow P^*$ is linear and bounded.

	For example, in the case of linear elliptic problems we may choose $P=Y=H^1_0(\Omega)$ with $\|y\|_G=\|y\|_{L_2(\Omega)}$, $U=L_2(\Omega)$, $B:L_2(\Omega) \to H^1_0(\Omega)^*$ the adjoint of the Sobolev embedding. 
	
	Introducing the product space $X=Y\times U$, $x=(y,u)$ and the constraint operator $C:= (A, -B)$, so that $Cx= Ay -Bu$, we can
	define the  operators
	\begin{align*}
	    M_y : Y &\to Y^*, &  \qquad M_u : U &\to U^*\\
	    \dup{(M_y y)}{v} &:= \langle y,v\rangle_G & \dup{(M_u u)}{w} &:= \langle u,w\rangle_U
	\end{align*}
    and rewrite~\eqref{linearQuadraticOptimalControlProblem} equivalently as 
	\begin{align}\label{eq:quadmin}
	 \min_{x\in X} \frac12 \dup{(M x)}{x} -\dup{s_x}{x}  \quad \mbox{ s.t. } \quad Cx=0
	\end{align}
	with $x=(x_y,x_u)$, $M: X\to X^*$, and $s_x \in X^*$ defined by
    \begin{align*}
     \dup{(M x)}{x} &= \dup{(M_y x_y)}{x_y}+\dup{(M_u x_u)}{x_u}=\langle x_y,x_y\rangle_G+\nu \langle x_u,x_u\rangle_U,\\
     \dup{s_x}{x} &= \dup{s_y}{ x_y} +\dup{s_u}{x_u} = \langle y_d,x_y\rangle_G+\nu \langle u_d,x_u\rangle_U.
   \end{align*}  
Using a block notation, we may also write
\[     
     \quad 	M = \begin{pmatrix}
	M_y & 0 \\
	0 & M_u 
	\end{pmatrix},
 \quad s_x=\begin{pmatrix} s_y \\ s_u \end{pmatrix}. 
    \]
\begin{remark}    
For a general $n$-dimensional vector space $V$ with dual $V^*$ the dual pairing $\dup{l}{v}$ is implemented via the formula $\dup{l}{v} = \underline l^T\underline v$, where  $\underline l, \underline v \in \R^n$ are the coordinate representations of $l$ and $v$, respectively, for a given basis of $V$ and its dual basis of $V^*$. For such bases, the coordinate representations $\underline L$ and $\underline{L^*}$ of a linear operator $L : V\to W$ and its Banach space adjoint $L^* : W^*\to V^*$ are related by $\underline{L^*}={\underline L}\,^T$. 
\end{remark}	
	By invertibility of $A$ we may also define the bounded control-to-state-mapping $S := A^{-1}B : U \to Y$, which is very often even a compact operator.
	If  $\nu$ is positive, the existence of an optimal solution to \eqref{linearQuadraticOptimalControlProblem} follows via standard arguments.
	%Using the Lagrange formalism
	%\begin{equation*}
	%	\lagrange(y,u,p) = J(y,u) + c(y,u)p,
	%\end{equation*}
  An optimal solution solves, together with a Lagrangian multiplier $p\in P$ (called adjoint state in this context)  the following optimality system:
	\begin{align*}
	y-y_d + A^*p & = 0, \\
	u-u_d - B^*p & = 0, \\
	A  y -B  u & = 0.
	\end{align*}
	Denoting $Z=Y\times U\times P$ we have a linear system of the form $Hz=s$, where $H : Z \to Z^*$ can be shown to have a continuous inverse. 
    In the following, we will use the notation $z=(y,u,p)=(z_y,z_u,z_p)$ and $s=(s_y,s_u,s_p)$ for the single components of the product space vectors in $Z$ and $Z^*$, respectively. In addition we will denote $X=Y\times U$ and $z_x=(z_y,z_u)$ for the primal components, so that $z=(z_x,z_p)$. 
	
    With this notation, we may write our linear system in different levels of detail:
	\begin{equation}
	\label{exactSystem}
	Hz=s
		\Leftrightarrow 
		\begin{pmatrix} 
		M & C^*\\
		 C & 0
			                                             \end{pmatrix}
\begin{pmatrix}
			z_x   \\
			z_p  
			\end{pmatrix} = \begin{pmatrix}
			s_x   \\
			0 
			\end{pmatrix}
		\Leftrightarrow 
	\begin{pmatrix}
	M_y & 0 & A^* \\
	0 & M_u & -B^* \\
	A & -B & 0
	\end{pmatrix}
	\begin{pmatrix}   z_y \\ z_u \\ z_p \end{pmatrix} = \begin{pmatrix}    s_y \\ s_u \\ 0 \end{pmatrix}
\end{equation}
	For large scale problems, direct factorization of $H$ is not computationally feasible. Thus, an iterative solver has to be applied for the solution of \eqref{exactSystem}. 
	
   If $M$ is positive definite on $\ker C$, the application of a projected preconditioned conjugate gradient (ppcg) method, which implements the cg method on $\ker C$, cf. \cite{gould2001solution} is possible. This method projects the search directions in the cg method via a constraint preconditioner to $\ker C$. For suitable right hand sides and initial iterates, a ppcg method as described in \Cref{algorithmppcg} can be applied. A preconditioner of the form
   \[
    Q = 		\begin{pmatrix} 
		\tilde M & C^*\\
		 C & 0
			                                             \end{pmatrix}
   \]
   ensures that all updates and iterates remain in $\krn C$ as long as for the initial iterate $z_{0,x} \in \ker C$ holds. Be aware that neither $H$ nor $Q$ need to be positive definite on $Z$, but only on $\ker C$, 
   but it is easily verified that $\dup{(Hd_k)}{d_k}=\dup{(Md_{k,x})}{d_{k,x}}>0$ for all computed  search directions $d_k \in Z$. 
	\begin{algorithm}[H]
		\caption{Projected Preconditioned Conjugate Gradient Method (ppcg).}
		\label{algorithmppcg}
		\begin{algorithmic}[1]
			\State \textbf{Solve:} $ Hz   = s$, where $s_p=0$
			\State \textbf{Input:} initial iterate $z_0 = \begin{pmatrix}
			z_{0,x}, z_{0,p}\end{pmatrix}$ satisfying $Cz_{0,x} = 0$
			%	\State \textbf{Setting:}  initial iterate $x_0$, symmetric and positive definite matrix $A$, linear and positive definite preconditioner $Q$, and right hand site $b$.		
			\State \textbf{Initialize:} $r_0 = Hz_0 -s$, $d_0 = g_0 = -Q^{-1} r_0$. 
			\For{$k=0,1,2,\dots$}
			\State $\alpha_k \leftarrow -\frac{\dup{r_k}{g_k}}{\dup{(M d_{k,x})}{d_{k,x}} }$
			\State $z_{k+1} \leftarrow z_k +\alpha_k d_k$
			\State $r_{k+1} \leftarrow r_k + \alpha_k H d_k$
			\State $  g_{k+1} \leftarrow  -Q^{-1}r_{k+1}$
			\State $\beta_{k} \leftarrow \frac{\dup{r_{k+1}}{g_{k+1}}}{\dup{r_k}{g_k}}$
			\State $d_{k+1} \leftarrow g_{k+1} +\beta_{k} d_k$
			\EndFor
		\end{algorithmic}
	\end{algorithm}

    In view of the block structure of \eqref{exactSystem},  a straightforward constraint preconditioner is
	\begin{equation} \label{eq:exact-constraint-preconditioner}
	Q:= \begin{pmatrix}
	0 & 0& A^* \\
	0 & \tilde M_u & -B^* \\
	A & -B & 0
	\end{pmatrix},
	\qquad 
	\tilde M := \begin{pmatrix}
	0 & 0 \\
	0 & \tilde M_u 
	\end{pmatrix}.
	\end{equation}
	This ensures that all iterates in the cg method are contained in the space $\ker C$. Here $\tilde M_u$ is a preconditioner for $M_u$. 
	The application of the preconditioner $Q$ to general right hand sides $r$
	such that $g= -Q^{-1}r$ yields the decoupled system
	\begin{align}
	\label{equationBlockPreconditioner1}
	A^* g_p &= r_y, \\
	\label{equationBlockPreconditioner2}
	\tilde M_u g_u  -B^* g_p& = r_u, \\
	\label{equationBlockPreconditioner3}
	A g_y -B g_u &= r_p,
	\end{align}
	to be solved. Thus, the solution of the coupled optimality system is reduced to the solution of a sequence of partial differential equations. In view of the evaluation of $\dup{r_k}{g_k}$ needed in line 5 and 9 of Algorithm~\ref{algorithmppcg} we compute:
	\[
	 \dup{r}{g}=\dup{(Qg)}{g}=\dup{(\tilde M_ug_u)}{g_u}=\dup{(r_u-B^*g_p)}{g_u} > 0 \mbox{ if } g_u \neq 0. 
	\]
	For stability reasons, the right formula, which can be evaluated during application of $Q^{-1}$ is preferable. 
	
	The speed of convergence of ppcg is governed by the condition number of $M$ with respect to $\tilde M$ on $\ker C$.
	\begin{proposition}
	 Assume that there are constants $\Gamma_U \ge \gamma_U > 0$ with
	 \[
	  \gamma_U \dup{(\tilde M_u u)}{u}\le  \dup{(M_u u)}{u} \le \Gamma_U \dup{(\tilde M_u u)}{u}
	  \quad \forall u\in U,
	 \]
	 such that the condition number of $M_u$ with respect to $\tilde M_u$ is $\kappa_U = \Gamma_U/\gamma_U$. Then the condition number $\kappa$ of $M$ on $\ker C$ with respect to $\tilde M$  is bounded by
	 \begin{align*}
	  \kappa \le (\nu^{-1}\|S\|_{U\to G}^2+1)\kappa_U.
	 \end{align*}        
	\end{proposition}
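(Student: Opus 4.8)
The plan is to use that $\ker C$ is globally parametrized by the control. Since $A$ is invertible with $S=A^{-1}B$, a pair $x=(x_y,x_u)$ lies in $\ker C$ exactly when $Ax_y=Bx_u$, i.e. $x_y=Sx_u$; hence every element of $\ker C$ has the form $x=(Su,u)$ for a unique $u\in U$. Substituting this representation turns the two-sided operator estimate into a scalar comparison of Rayleigh quotients over $u\in U$.

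First I would evaluate both quadratic forms on $x=(Su,u)$. For the numerator,
\[
\dup{(Mx)}{x}=\dup{(M_y(Su))}{Su}+\dup{(M_u u)}{u}=\|Su\|_G^2+\dup{(M_u u)}{u},
\]
whereas the preconditioner has a vanishing state block, so it only sees the control,
\[
\dup{(\tilde M x)}{x}=\dup{(\tilde M_u u)}{u}.
\]
This is the structural reason the bound decouples: everything reduces to comparing $\dup{(Mx)}{x}$ with $\dup{(M_u u)}{u}$ and then invoking the given $\kappa_U$-estimate between $M_u$ and $\tilde M_u$.

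The two directions are then obtained by different mechanisms. For the lower bound I would simply discard the nonnegative seminorm term, giving $\dup{(Mx)}{x}\ge\dup{(M_u u)}{u}\ge\gamma_U\dup{(\tilde M x)}{x}$, so the lower spectral constant is at least $\gamma_U$. For the upper bound I would instead control the state contribution by the operator norm of $S$: from $\|Su\|_G^2\le\|S\|_{U\to G}^2\|u\|_U^2$ and $\dup{(M_u u)}{u}=\nu\|u\|_U^2$ one gets $\|Su\|_G^2\le\nu^{-1}\|S\|_{U\to G}^2\dup{(M_u u)}{u}$, hence $\dup{(Mx)}{x}\le(\nu^{-1}\|S\|_{U\to G}^2+1)\dup{(M_u u)}{u}\le(\nu^{-1}\|S\|_{U\to G}^2+1)\Gamma_U\dup{(\tilde M x)}{x}$. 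Dividing the resulting upper constant by the lower one yields the claimed bound $\kappa\le(\nu^{-1}\|S\|_{U\to G}^2+1)\kappa_U$.

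The computation is essentially routine; the only point requiring care is the asymmetry between the two estimates. The lower bound is insensitive to the state because $\|\cdot\|_G$ is merely a seminorm and may be dropped without loss, so no information about $S$ enters. The upper bound must absorb the full observed energy of the state, and this is exactly where the factor $\nu^{-1}\|S\|_{U\to G}^2$ --- quantifying how strongly the control drives the observed part of the state relative to the regularization weight --- appears. The modest obstacle is therefore to notice that the two directions are governed by different quantities and to keep track of the $\nu$-weighting when trading $\|u\|_U^2$ for $\dup{(M_u u)}{u}$.
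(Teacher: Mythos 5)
Your proof is correct and follows essentially the same route as the paper's: parametrize $\ker C$ by $x=(Su,u)$, obtain the lower constant $\gamma_U$ by dropping the nonnegative term $\|Su\|_G^2$, and obtain the upper constant by bounding $\|Su\|_G^2\le \nu^{-1}\|S\|_{U\to G}^2\,\dup{(M_u u)}{u}$ before invoking the $\Gamma_U$-estimate. The paper's computation is a single two-sided chain of inequalities, but the content and the constants are identical to yours.
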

\begin{proof}
	We compute for $x\in \mathrm{ker}\, C$, i.e. $x_y=A^{-1}Bx_u=Sx_u$:
	\begin{align*}
	  \gamma_U \dup{(\tilde M x)}{x} &\le \nu \langle x_u,x_u\rangle_U  \le \dup{(M  x)}{x}
	  =  \langle x_y,x_y\rangle_G +  \nu \langle x_u,x_u\rangle_U   \\
	  &=\langle Sx_u,Sx_u\rangle_G +  \nu \langle x_u,x_u\rangle_U
	  \le (\|S\|^2_{U\to G} +\nu)\langle x_u,x_u\rangle_U\\
	  &= (\|S\|^2_{U\to G} +\nu)\frac{1}{\nu} \dup{(M_ux_u)}{x_u}
	  \le (\nu^{-1}\|S\|^2_{U\to G} +1)\Gamma_U \dup{(\tilde M x)}{x}
	\end{align*}
	and thus:
	\[
	\kappa \le  \frac{ (\nu^{-1}\|S\|^2_{U\to G} +1)\Gamma_U}{\gamma_U}.
	\]
\end{proof}
     \begin{remark}
      Due to the influence of $\kappa_U$ on the overall condition number, it is often advocated to use a good approximation $\tilde M_u$ for $M_u$, i.e., $\kappa_U \approx 1$. Since $U$ is usually an $L_2$ space, this can be easily achieved by a Chebyshev semi-iteration applied to the mass-matrix $M_u$, preconditioned by its diagonal~\cite{wathen2008}. 
      
      Observe that difficulties arise, if $\nu$ is very small. In practice, moderate values $\nu = 10^{-1} \dots 10^{-4}$, occuring in applications, yield reasonable performance. For smaller values of $\nu$, performance degrades. 
     \end{remark}

     	We observe that the application of the preconditioner $Q$ requires solving \eqref{equationBlockPreconditioner1} and \eqref{equationBlockPreconditioner3} exactly or at least to very high accuracy to obtain iterates which are contained in the desired linear subspace $Ay-Bu=0$. Since $A$ is  a differential operator or a discretization thereof, this may become very expensive for fine discretizations. 
	
	For this reason we may try to solve \eqref{equationBlockPreconditioner1} and \eqref{equationBlockPreconditioner3} by an iterative method. However,  pcg methods can in general not be used together with variable or inexact preconditioners such as pcg itself applied to \eqref{equationBlockPreconditioner1} and \eqref{equationBlockPreconditioner3} within the application of $Q$. Otherwise,  the orthogonality properties of the search directions break down. 
	Hence, straightforward application  of iterative solvers to \eqref{equationBlockPreconditioner1} and \eqref{equationBlockPreconditioner3} is only possible, if their termination criterion is chosen very stringently. During the ppcg method, many of these solves are necessary,  which limits the efficiency of the whole iteration. This fact is well known and widely considered as a shortcoming of ppcg methods. 

    Since the close link between $H$ and its preconditioner $Q$ cannot be loosened without the risk of degradation of robustness of ppcg, we will pursue the following approach. We assume that we have a surrogate operator $\tilde A$ for $A$ available, such that \eqref{equationBlockPreconditioner1} and \eqref{equationBlockPreconditioner3} can be solved efficiently (we will discuss the elliptic case in detail below). With $\tilde C := (\tilde A, -B)$, we can define a modified operator with matching preconditioner as follows:
    \[
     	\tilde H := \begin{pmatrix} 
		M & \tilde C^*\\
		 \tilde C & 0
		                                     \end{pmatrix},
		                                     \qquad 
     	\tilde Q := \begin{pmatrix} 
		\tilde M  & \tilde C^*\\
		 \tilde C & 0
		 \end{pmatrix}.
	\]		                                             
	Then, instead of \eqref{exactSystem} we will solve the modified problem $\tilde H \tilde z = s$ or, equivalently,
		\begin{align}\label{eq:quadminMod}
	 \min_{\tilde x\in X} \frac12 \dup{(M \tilde x)}{\tilde x}-\dup{s_x}{ \tilde x} \quad \mbox{ s.t. } \quad \tilde C \tilde x=0. 
	\end{align}
    Since in general $\ker C \neq \ker \tilde C$, the solutions of \eqref{eq:quadmin} and \eqref{eq:quadminMod} do not coincide. Moreover, the solution of \eqref{eq:quadminMod} is not even a feasible point of \eqref{eq:quadmin}. We can project $\tilde x$ to $\ker C$ by computing $y_A := A^{-1}B\tilde u$ once. However, $(y_A,\tilde u)$ is only a suboptimal feasible point of \eqref{eq:quadmin}
	We thus require an iterative procedure to solve \eqref{eq:quadmin} by computing a sequence of solutions of \eqref{eq:quadminMod} and projecting back onto $\ker C$. 
	
	\section{A General Primal-Dual Projection Algorithm}

    Consider a general quadratic minimization problems of the form: 
	\begin{equation}\label{eq:problem}
	\min_x q(x) \quad\mbox{ s.t. } x \in x_0+V \subset X,
	\end{equation}
	where $q$ is a quadratic function on a Hilbert space $(X,\langle\cdot,\cdot\rangle)$, $V$ is a closed linear subspace of $X$ and $x_0+V$ is an affine subspace of $X$. In the previous section we had $V=\ker C$ and $x_0=0$.  
	
	Let $x_*$ be a solution of \eqref{eq:problem}. Computing derivatives of $q$ along admissible directions $v\in V$ yields
	\[
	0 = \dup{q'(x_*)}{v} \quad \forall v \in V,
	\]
	or in other words $q'(x_*) \in V^\circ$, where
	\begin{align*}
	V^\circ := \{ l \in X^* : l(v) = 0 \; \forall v\in V\}
	\end{align*}
	is the annihilator or polar set of $V$. This can in turn be written as follows with the help of a Lagrangian multiplier  $\lambda := -q'(x_*)$: 
	\begin{align*}
	\exists \lambda \in V^\circ : 0 &=q'(x_*)+\lambda\\
	x_* \in x_0+V
	\end{align*}
	Adding any $\lambda\in V^\circ$ does not change $q$ on $V$, so the solution of \eqref{eq:problem} remains the same if we add $\lambda$ to $q$.
	Updating this Lagrange multiplier will play a crucial role in our iterative method. 
	
	Since $q$ is quadratic, its second derivative $b:=q''(x)$ is a bilinear form that is independent of the linearization point $x_0$, and the second order Taylor expansion is exact:
	\begin{equation*}\label{eq:q-taylor}
	    q(x_0+\delta x)=q(x_0)+\dup{q'(x_0)}{\delta x}+\frac12 b(\delta x,\delta x), \qquad b=q''(x) \;\forall x\in X.
	\end{equation*}

	   As indicated in the introduction, we will replace \eqref{eq:problem} by the surrogate incremental problem
		\begin{equation}\label{eq:problem2}
		\min_{\delta y\in X} \tilde q_{x_0}(\delta y) \quad \mbox{ s.t. } \quad \delta y \in \tilde V,
		\end{equation}
		where $\tilde V$ is a closed linear subspace of $X$ that replaces $V$, and 
	\begin{equation}\label{eq:q-tilde-definition}
		\tilde q_x(\delta y) := \dup{q'(x)}{\delta y}+\frac12 \tilde b(\delta y,\delta y).
	\end{equation}
	    This is useful, if the surrogate problem can be solved more efficiently than the original one, so we may interpret $\tilde b$ as a preconditioner. 
	   In the previous section, we had $\tilde V = \ker \tilde C$.
		To guarantee existence of a solution, we assume that $\tilde b$ is elliptic on $\tilde V$. 
		
		\subsubsection*{Primal Projection}
		
		Clearly, a solution $\delta y$ of \eqref{eq:problem2} is an element of $\tilde V$, and in general infeasible for \eqref{eq:problem}. To achieve feasibility, we project $\delta y$ back to an element $\delta x\in V$ by a linear (not necessarily orthogonal) projection
		\[
		\Proj : X \to  X,
		\]
		with the spaces
		\[
		V := \range \Proj, \qquad W := \krn \Proj.
		\]
		By $\Proj^* : X^* \to X^*$ we denote its adjoint, from which we know by the closed range theorem: 
		\[
		W^\circ = \range \Proj^*, \qquad V^\circ = \krn \Proj^*.
		\]
		      As a preliminary algorithmic idea, we may iterate the following procedure: compute a minimizer $\delta y$ of \eqref{eq:problem2},
      project $\delta x = \Proj\delta y$ back to $V$, and update $x_{k+1}=x_k+\omega \delta x$ with an optimal line-search parameter $\omega$.  
	  However, this simple algorithm does not converge to a minimizer of \eqref{eq:problem}, but stagnates after some initial progress has been made: As $\dup{q'(x_k)}{v}$ becomes smaller for all $v\in V$ during this iteration, $\dup{q'(x_k)}{w}$ for $w\in W$ starts to dominate, and thus the search directions $\delta y$ point more and more into directions close to $W$. However, since $\krn \Proj=W$, most of the search direction is projected to $0$.

	\subsubsection*{Dual Projection}
    It is well known in nonlinear optimization that the introduction of a Lagrangian multiplier $\lambda\in X^*$ can eliminate such effects. We add $\lambda$ to $q$, such that (i) the solution of the original problem is not changed and (ii) adverse search directions in $W$ are removed. Property (i) is guaranteed by choosing $\lambda\in V^\circ$, since then the objective is unchanged on the admissible space $V$. Property (ii) is implied by
    \[
     \dup{(q'(x_k)+\lambda)}{w} =0 \;\; \forall w\in W,
    \]
    i.e.\ $q'(x_k)+\lambda \in W^\circ$. The choice 
    \begin{equation} \label{eq:lambda-def}
        \lambda(x_k) = -(I_X-\Proj)^* q'(x_k) 
    \end{equation}
    yields
    \[
    \dup{\lambda}{v} = - \dup{q'(x_k)}{ (I_X-\Proj)v} = 0 \quad \forall v\in V
    \]
    and
    \[
    \dup{(q'(x_k)+\lambda)}{w} =\dup{(q'(x_k) - q'(x_k) + \Proj^* q'(x_k))}{w}
    = \dup{q'(x_k)}{\Proj w}  = 0 \quad \forall w\in W,
    \]
    and therefore satisfies both requirements. This motivates the Primal Dual Projection (PDP) Algorithm~\ref{alg:PDP}.
    	\begin{algorithm}[h!]
		\caption{General PDP Algorithm.}
		\label{alg:PDP}
		\begin{algorithmic}[1]
			\State \textbf{Solve: }
			$\min q(x) \mbox{ s.t. } x \in x_0+V.$
			
			\State \textbf{Input:} initial iterate $x_0$.
			\For{$k=  1, 2, \! ...$}
		     \State $\lambda_k \leftarrow \Proj^*q'(x_k)-q'(x_k)	$ \hspace{5cm} \mbox{(dual projection)}
		     \State $\delta y_k \leftarrow \min_{v\in \tilde V} \;\tilde q_{x_k}(v)+\dup{\lambda_k}{v}$ \hspace{3.3cm} (surrogate problem)
		     \State $\delta x_k \leftarrow \Proj \delta y_k$ \hspace{6.7cm} \mbox{(primal projection)}
		     \State $	x_{k+1} \leftarrow x_k+\omega_k\delta x_k \hspace{5.8cm} \mbox{ (with $\omega_k$ defined by \eqref{eq:line-search}) }$
			\EndFor
		\end{algorithmic}
	\end{algorithm}
	
	Obviously, since $x_0\in x_0+V$ at the beginning and $\delta x_k\in V$, we see that $x_k$ remains in $x_0+V$ for the whole iteration. More precisely,
	\[
	x_k \in x_0+(\tilde V+W) \cap V
	\]
	holds, since $\Proj\delta y_k = \delta y_k-(I_X-\Proj)\delta y_k \in \tilde V +W$. Thus, to be able to reach every element of $V$ in our iteration we need the following basic condition: 
	\begin{assumption} \label{as:subspace}
	Assume that the following inclusion holds:
		\begin{equation}\label{eq:wd}
		V \subset W+\tilde V.
		\end{equation}
    \end{assumption}
		This is true, for example, if $\tilde V \supset V$ or $W+\tilde V=X$.
		
	\subsubsection*{Optimal Line Search} The well known optimal step length $\omega_k$ is chosen by solving the scalar minimization problem
	\[
	\min_{\omega \ge 0} q(x_k+\omega \delta x_k).
	\]
	Dropping the iteration index $k$ and using the linearity of $q'(x)$ in $x$, we obtain
	\begin{equation}\label{eq:line-search}
	\begin{split}
	0 & = \frac{d}{d\omega}q(x+\omega \delta x)
	    = \dup{q'(x+\omega \delta x)}{\delta x} 
	    = \dup{q'(x)}{\delta x} +\omega \dup{q'(\delta x)}{\delta x}\\
	&\Rightarrow \omega = -\frac{\dup{q'(x)}{\delta x}}{\dup{q'(\delta x)}{\delta x}}
	=-\frac{\dup{q'(x)}{\delta x}}{b(\delta x,\delta x)}
	% =-\frac{b(x,\delta x)+(l+\lambda)(\delta x)}{b(\delta x,\delta x)}=-\frac{b(x-x_*,\delta x)}{b(\delta x,\delta x)}
	\end{split}
	\end{equation}
	Thus, the next iterate $x_+=x+\omega \delta x$ satisfies 
	\begin{align}\label{eq:gradOrth}
	 0 = \dup{q'(x_+)}{\delta x} 
	 = \underbrace{\dup{q'(x_*)}{\delta x}}_{=0} + \dup{q''(x_*)(x_+-x_*)}{\delta x} = b(x_+-x_*,\delta x).
	\end{align}
    With the energy product $\langle v,w\rangle_b := b(v,w)$ we therefore obtain  $x_+-x_* \perp_b x_+-x$
    and
	\begin{equation}\label{eq:Pythagoras}
      \|x_*-x\|_b^2 = \|x_*-x_+\|_b^2+\|x_+-x\|_b^2
	\end{equation}

	\section{Convergence analysis}
	
	We will give an interpretation of Algorithm~\ref{alg:PDP} as a preconditioned gradient method. If the subspace condition Assumption~\ref{as:subspace} holds we can show
	linear convergence, characterized by a certain condition number. For simplicity we assume w.l.o.g. $x_0=0$. 
    We consider the reduced functional
	\[
	\hat f(x) := q(\Proj x) \quad \mbox{ with }  \dup{\hat f'(x)}{v} = \dup{q'(\Proj x)}{\Proj v} 
	\Rightarrow \hat f'(x)=\Proj^* q'(\Proj x)\in W^\circ.
	\]
	We see that $\hat f(x)=q(x)$ for $x\in V$. If $q$ is positive definite, the problem
	\[
	 \min_{x\in X} \hat f(x)
	\]
	has a unique solution on $V$, but of course the solution is non-unique in $X$, since
	$\Proj(v+w)=\Proj v$ and consequently $\hat f(v)=\hat f(v+w)$ for all $w \in W=\krn \Proj$. 
	
	Thus, we may interpret our problem as an optimization problem on a quotient space. Consider the equivalence class $[\xi]=\xi+W$ in the quotient space $X/W$ and define $f:X/W \to \R$ as $f([\xi]) := \hat f(\xi)$. We may write the reduced problem as
	\begin{equation}\label{eq:problemQuot}
	 \min_{[\xi]\in X/W} f([\xi]).
	\end{equation}

	Concrete implementations will, of course, always work with the representative $\Proj \xi$, but for our theoretical investigations, switching to equivalence classes is the
	clearest way to express the theoretic results. The idea is to express Algorithm~\ref{alg:PDP} as a gradient method in $X/W$ for \eqref{eq:problemQuot}.

	%\subsubsection*{Computation of a gradient step}
	The notion of gradients depends on the chosen norm. We define the norm by the scalar product
	\begin{equation}\label{eq:preconditioner}
	\langle [v],[v]\rangle := \inf_{\nu\in [v] \cap \tilde V} \tilde b(\nu,\nu),
	\end{equation}
	which can also be interpreted as a preconditioner.
	\begin{lemma} \label{lem:scalar-product-welldefined}
		If the subspace condition \eqref{eq:wd} is fulfilled, and $\tilde b$ is elliptic on $\tilde V$, then the scalar product~\eqref{eq:preconditioner} is well defined and 
		the infimum is attained:
		\[
		\forall [v]:\; \argmin\limits_{\nu\in [v] \cap \tilde V} \tilde b(\nu,\nu) \ne \emptyset
		\]
	\end{lemma}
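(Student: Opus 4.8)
The plan is to verify the two assertions separately: that the set $[v]\cap\tilde V$ over which the infimum runs is nonempty (so the quantity is finite and depends only on the class, i.e.\ is well defined), and that the infimum of the elliptic form $\tilde b$ over this set is actually attained. The nonemptiness is where Assumption~\ref{as:subspace} enters, while attainment is a Hilbert-space projection argument.

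First I would settle nonemptiness. Since $\Proj$ is a projection with $\range\Proj=V$ and $\krn\Proj=W$, every class has a representative in $V$: writing $v=\Proj v+(I_X-\Proj)v$ with $(I_X-\Proj)v\in W$ gives $[v]=[\Proj v]$ and $\Proj v\in V$. The subspace condition $V\subset W+\tilde V$ then lets me decompose $\Proj v=w+\nu_0$ with $w\in W$ and $\nu_0\in\tilde V$. Since $\nu_0-\Proj v=-w\in W$, the element $\nu_0$ lies in $[\Proj v]=[v]$ and in $\tilde V$, so $[v]\cap\tilde V\neq\emptyset$. As $\nu_0\in\tilde V$ we have $\tilde b(\nu_0,\nu_0)<\infty$, hence the infimum is a finite nonnegative number; being defined purely through the set $[v]\cap\tilde V$, it depends only on the class $[v]$.

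Next I would describe the geometry of the feasible set. Fixing any $\nu_0\in[v]\cap\tilde V$, two elements of $[v]\cap\tilde V$ differ by a vector that is simultaneously in $W$ (same class) and in $\tilde V$ (both lie in $\tilde V$), and conversely adding any element of $W\cap\tilde V$ keeps one in the set, so
\[
[v]\cap\tilde V=\nu_0+(W\cap\tilde V).
\]
Because $\Proj$ is bounded, $W=\krn\Proj$ is closed, and $\tilde V$ is closed by hypothesis; hence $W\cap\tilde V$ is a closed subspace and $\nu_0+(W\cap\tilde V)$ is a closed, convex, nonempty affine subset of $\tilde V$.

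Finally I would obtain attainment from ellipticity. Since $\tilde b$ is elliptic (and bounded) on $\tilde V$, the form $\langle\cdot,\cdot\rangle_{\tilde b}:=\tilde b(\cdot,\cdot)$ is an inner product whose induced norm is equivalent to $\|\cdot\|$ on the closed subspace $\tilde V$, so $(\tilde V,\langle\cdot,\cdot\rangle_{\tilde b})$ is a Hilbert space. Minimizing $\tilde b(\nu,\nu)=\|\nu\|_{\tilde b}^2$ over $\nu_0+(W\cap\tilde V)$ is exactly the search for its point nearest the origin in the $\tilde b$-metric, so the projection theorem yields a unique minimizer and the infimum is attained. I expect the main subtlety to be bookkeeping rather than depth: one must use Assumption~\ref{as:subspace} in precisely the form above to guarantee nonemptiness, and one must invoke continuity of $\Proj$ to know $W\cap\tilde V$ is closed before the projection theorem applies. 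A self-contained alternative that avoids naming the projection theorem also works: ellipticity bounds any minimizing sequence, weak compactness together with weak closedness of the affine set produces a weak limit inside the set, and boundedness of $\tilde b$ gives weak lower semicontinuity of $\nu\mapsto\tilde b(\nu,\nu)$, which identifies the weak limit as a minimizer.
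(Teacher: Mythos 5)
Your proof is correct and follows essentially the same route as the paper's: nonemptiness of $[v]\cap\tilde V$ via the subspace condition $V\subset W+\tilde V$, closedness of that affine set, and then attainment of the minimum of the elliptic form by the standard Hilbert-space argument. Your version is merely more detailed---you explicitly reduce an arbitrary class $[v]$ to its representative $\Proj v\in V$ and identify $[v]\cap\tilde V$ as a translate of the closed subspace $W\cap\tilde V$, steps the paper leaves implicit.
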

	\begin{proof}
		Since $V\subset W+\tilde V$, for given $v\in V$ we can find $w\in W$ and $\tilde v\in \tilde V$, such that $v=-w+\tilde v$. Hence, 
		for each $v$ we have $w\in W$ and $\tilde v\in \tilde V$, such that $v+w=\tilde v$. In other words,
		$(v+W) \cap \tilde V$ is a nonempty closed subspace of $X$. Therefore, minimizing an elliptic function over $[v]\cap \tilde V$ admits a unique solution. 
	\end{proof}
	
	The gradient step direction $[\delta\xi]$ for $f([\xi])$ with respect to the scalar product~\eqref{eq:preconditioner} is given by 
	\begin{equation}\label{eq:gradient-step}
	    [\delta\xi] := \argmin_{[v]\in X/W} \frac12 \langle [v],[v]\rangle+ \dup{f'([\xi])}{[v]}.
	\end{equation} 
	
	Since $f$ is quadratic, exact linesearch can be performed. As usual, the optimal step size is
	\begin{equation}\label{eq:stepsize-SDM-XW}
	    \omega = - \frac{\dup{f'([\xi])}{[\delta\xi]}}{f''([\xi]) ([\delta\xi],[\delta\xi])}.
	\end{equation}
	Now, we can formulate the unconstrained steepest descent method given in Algorithm~\ref{alg:AbstractGM}.
	
	\begin{algorithm}
		\caption{Steepest Descent Algorithm in $X/W$.}
		\label{alg:AbstractGM}
		\begin{algorithmic}[1]
			\State \textbf{Solve:} $\min_{[\xi]\in X/W} f([\xi])$
			\State \textbf{Input:} initial iterate $[\xi_0] \in X/W$.
			\For{$k=  0, 1,  \! ...$}
			  \State $[\delta\xi_k] \leftarrow \argmin_{[v]\in X/W} 
			            \frac12 \langle [v],[v]\rangle+\dup{f'([\xi_k])}{[v]}$
			  \State
			  \State $\omega_k 
			  \quad\leftarrow- \displaystyle\frac{f'([\xi_k])[\delta\xi_k]}{f''([\xi_k]) ([\delta\xi_k],[\delta\xi_k])}$
			  \State
		      \State $[\xi_{k+1}] \leftarrow [\xi_k] + \omega_k [\delta\xi_k]$
			\EndFor
		\end{algorithmic}
	\end{algorithm}
	
	As indicated above, the primal-dual projection method and the reduced gradient method are equivalent:
    \begin{proposition}
        Let the assumptions of Lemma~\ref{lem:scalar-product-welldefined} be satisfied.
		For initial values $x_0\in V$ and $[\xi_0]\in X/W$, respectively, with $x_0 \in [\xi_0]$, the sequences of iterates $x_k$ and $[\xi_k]$ generated by Algorithms~\ref{alg:PDP} and~\ref{alg:AbstractGM}, respectively, coincide: $x_k \in [\xi_k]$.
	\end{proposition}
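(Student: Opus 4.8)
The plan is to prove $x_k \in [\xi_k]$ by induction on $k$, carrying along the auxiliary invariant $x_k \in V$ (so that $\Proj x_k = x_k$, since $V = \range\Proj$). The base case is the hypothesis $x_0 \in [\xi_0]$ together with $x_0 \in V$. For the inductive step I would assume $x_k \in [\xi_k]$ and $x_k \in V$, and reduce the claim $x_{k+1}\in[\xi_{k+1}]$ to two facts: that the primal search direction represents the quotient one, i.e.\ $\delta x_k \in [\delta\xi_k]$, and that the two line-search parameters $\omega_k$ agree. Granting these, $x_{k+1} = x_k + \omega_k \delta x_k$ lies in $[\xi_k] + \omega_k[\delta\xi_k] = [\xi_{k+1}]$, and $x_{k+1}\in V$ because $x_k,\delta x_k\in V$ with $\delta x_k = \Proj\delta y_k \in \range\Proj$.

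The core of the argument is to identify the surrogate problem in Algorithm~\ref{alg:PDP} with the quotient gradient step \eqref{eq:gradient-step}. First I would rewrite the latter as a plain minimization over $\tilde V$. Since $f'([\xi_k])$ is represented by $\hat f'(\xi_k)\in W^\circ$, which annihilates $W$ and hence evaluates identically on all representatives of a class, the linear term is constant along $[v]\cap\tilde V$ and may be pulled inside the infimum defining the norm \eqref{eq:preconditioner}, giving
\[
\tfrac12\langle[v],[v]\rangle + \dup{f'([\xi_k])}{[v]} = \inf_{\nu\in[v]\cap\tilde V}\Big(\tfrac12\tilde b(\nu,\nu) + \dup{\hat f'(\xi_k)}{\nu}\Big).
\]
Because the subspace condition \eqref{eq:wd} guarantees (as in the proof of Lemma~\ref{lem:scalar-product-welldefined}) that every class of $X/W$ meets $\tilde V$, minimizing over $[v]\in X/W$ and then over $\nu\in[v]\cap\tilde V$ is the same as minimizing over all $\nu\in\tilde V$. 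Thus $[\delta\xi_k] = [\nu^*]$, where $\nu^*$ solves $\min_{\nu\in\tilde V}\tfrac12\tilde b(\nu,\nu) + \dup{\hat f'(\xi_k)}{\nu}$. On the other side, inserting $\lambda_k = \Proj^* q'(x_k) - q'(x_k)$ into the surrogate objective $\tilde q_{x_k}(v) + \dup{\lambda_k}{v}$ collapses its linear part to $\dup{\Proj^* q'(x_k)}{v}$, so the surrogate problem reads $\min_{v\in\tilde V}\tfrac12\tilde b(v,v) + \dup{\Proj^* q'(x_k)}{v}$. Finally the inductive hypothesis gives $\Proj\xi_k = \Proj x_k = x_k$, hence $\hat f'(\xi_k) = \Proj^* q'(\Proj\xi_k) = \Proj^* q'(x_k)$, so the two minimizations coincide and $\delta y_k = \nu^*$. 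Consequently $\delta x_k = \Proj\delta y_k$ differs from $\delta y_k$ by $(I_X-\Proj)\delta y_k\in W$, whence $\delta x_k\in[\delta y_k] = [\delta\xi_k]$.

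It then remains to match the step sizes. Here I would use $\Proj\delta x_k = \delta x_k$ (as $\delta x_k\in V$) and the identity $f''([\xi])([v],[v]) = b(\Proj v,\Proj v)$, which follows from $f([\xi]) = q(\Proj\xi)$ and $q''\equiv b$. Evaluating the quotient step size \eqref{eq:stepsize-SDM-XW} on the representative $\delta x_k$, the numerator becomes $\dup{\hat f'(\xi_k)}{\delta x_k} = \dup{\Proj^* q'(x_k)}{\delta x_k} = \dup{q'(x_k)}{\delta x_k}$ and the denominator becomes $b(\delta x_k,\delta x_k)$; both coincide with the PDP line search \eqref{eq:line-search}, so the $\omega_k$ agree and the induction closes.

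The step I expect to be the main obstacle is the collapse of the double minimization in the second paragraph: one must verify carefully that the linear functional really is constant on each class (i.e.\ well-definedness on $X/W$ via $\hat f'(\xi_k)\in W^\circ$), that the subspace condition makes every class meet $\tilde V$ so the norm is finite everywhere, and that interchanging the minimization over classes with the infimum over $\tilde V$-representatives is legitimate. The attainment guaranteed by Lemma~\ref{lem:scalar-product-welldefined} is exactly what makes $\nu^*$ well defined and unique; everything else reduces to a direct computation once the inductive identification $\Proj\xi_k = x_k$ is in place.
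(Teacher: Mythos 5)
Your proposal is correct and follows essentially the same route as the paper's proof: induction with the identification $x_k=\Proj\xi_k$, collapsing the double minimization over $X/W$ and $[v]\cap\tilde V$ into a single minimization over $\tilde V$, identifying that problem with the surrogate problem via $\lambda_k$ (your $\Proj^*q'(x_k)$ equals the paper's $q'(x)+\lambda(x)$), and then matching the step sizes on representatives. Your explicit bookkeeping of the auxiliary invariant $x_k\in V$ is a minor sharpening of a point the paper leaves implicit, not a different argument.
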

	\begin{proof}
	    By induction over $k$, assume that $x_k\in [\xi_k]$, which implies $x_k = \Proj \xi_k$. We will drop the iteration index $k$ for brevity. The directional derivative in~\eqref{eq:gradient-step} can be written in terms of the multiplier $\lambda$ by~\eqref{eq:lambda-def} as
	    \begin{equation}\label{eq:directional-derivative-XW}
	    \dup{f'([\xi])}{[v]} =\dup{\Proj^*q'(\Proj \xi)}{\nu} 
	    = \dup{q'(x)}{\nu} - \dup{(I-\Proj)^*q'(x)}{\nu} 
	    = \dup{(q'(x)+\lambda(x))}{\nu} 
	    \quad \forall\nu\in [v].
	    \end{equation}
	    Let $g:=q'(x)+\lambda(x)$. Under the assumptions of Lemma~\ref{lem:scalar-product-welldefined}, the steepest descent direction $[\delta\xi]$ is computed via~\eqref{eq:gradient-step} by solving
    	\begin{align*}
	    \min_{[\delta\xi]\in X/W} \frac12 \min_{\tilde v\in [\delta\xi] \cap \tilde V} \tilde b(\tilde v,\tilde v)+ \dup{f'([\xi])}{\delta\xi}
	    &= \min_{[\delta\xi]\in X/W}\min_{\tilde v\in [\delta\xi] \cap \tilde V} \left(\frac12\tilde b(\tilde v,\tilde v)+g(\tilde v)\right)\\
	    &=\min_{\tilde v\in \tilde V} \left(\frac12 \tilde b(\tilde v,\tilde v)+g(\tilde v)\right).
	    \end{align*}
	    The first equality is due to $\dup{f'([\xi])}{\delta\xi} = g(\tilde v)$ since $\tilde v\in [\delta\xi]$, and the last follows from the fact that
	    \[
	    \tilde V = X \cap \tilde V = \left(\bigcup_{[\delta\xi]\in X/W} [\delta\xi]\right) \cap \tilde V=\bigcup_{[\delta\xi]\in X/W} ([\delta\xi]\cap \tilde V),
	    \]
	    such that the minimization problems on both sides of the equality are equivalent. 
	
	    Hence, by the definition of $\tilde q_x$ in~\eqref{eq:q-tilde-definition}, we can compute a steepest descent direction for $f$ by first solving
	    \[
	    \min_{\tilde v\in \tilde V} \frac12 \tilde b(\tilde v,\tilde v)+g(\tilde v)
	    = \min_{\tilde v\in \tilde V} \dup{q'(x)}{\tilde v}+\frac12 \tilde b(\tilde v,\tilde v)
	            +\dup{\lambda(x)}{\tilde v}
	    = \min_{\tilde v\in \tilde V} \tilde q_{x}(\tilde v)+ \dup{\lambda(x)}{\tilde v}
	    \]
	    and then taking the equivalence class $[\delta \xi]=\tilde v+W$. Thus, the reduced steepest descent direction problem~\eqref{eq:gradient-step} is equivalent to \eqref{eq:problem2} defining the search direction $\delta y$ in Algorithm~\ref{alg:PDP}, which means $\delta y \in [\delta\xi]$ and consequently $\delta x = \Proj \delta y = \Proj \delta\xi \in [\delta\xi]$.

		Moreover, the step size computations~\eqref{eq:line-search} and~\eqref{eq:stepsize-SDM-XW} are equivalent: Using~\eqref{eq:directional-derivative-XW}, $\delta x\in V\subset \ker \lambda$, and $b=q''(x)$ we obtain
		\[
		- \frac{\dup{f'([\xi])}{[\delta\xi]}}{f''([\xi])([\delta\xi],[\delta\xi])}
		= - \frac{\dup{(q'(x)+\lambda(x))}{\delta x}}{q''(\Proj\xi)(\Proj\delta\xi,\Proj\delta\xi)}
		= - \frac{\dup{q'(x)}{\delta x}}{q''(x)(\delta x,\Proj\delta x)}
		= -\frac{\dup{q'(x)}{\delta x}}{b(\delta x,\delta x)}.
		\]
		Thus, $x+\omega\delta x \in [\xi]+\omega[\delta\xi]$, which completes the induction.
	\end{proof}	
	
	% (for $x\in V$ and $\delta y\in \tilde V$):
	% \begin{align*}
	%  \frac12 q''(\delta y,\delta y)&+q'(x)\delta y+\lambda(\delta y)\\
	%  &=q(x+\delta y)+\lambda(x+\delta y)-q(x).
	% \end{align*}

    It is well known that preconditioned gradient methods, equipped with the above optimal step size, converge linearly in the energy norm defined by the quadratic term $b$, as long as both the preconditioner and $b$ are positive definite~\cite{iterativeMethodsForSparseLinearSystems}. 
    In our case, the energy norm is given by
    \[
	\|[v]\|_b :=\|\Proj v\|_b=b(\Proj v,\Proj v)^{1/2}.
	\]
	Since $x_k-x_* \in V$, so that $\|x_k-x_*\|_b=\|\Proj(x_k-x_*)\|_b$, we conclude the following result:
	\begin{theorem}\label{thm:speed}
	 Assume there are constants $\Gamma \ge \gamma>0$, such that the norm equivalence  
	\begin{align}\label{eq:condition}
	\gamma \min_{w\in [v]\cap \tilde V}\tilde  b(w,w) \le b(v,v) \le \Gamma \min_{w\in [v]\cap \tilde V} \tilde b(w,w) \quad \forall v\in V
	\end{align}
	holds. Let $\kappa := \Gamma/\gamma$ denote the condition number of Problem~\eqref{eq:problem} with respect to Problem~\eqref{eq:problem2}. Then, Algorithms~\ref{alg:AbstractGM} and~\ref{alg:PDP} converge linearly in the norm induced by $b$,
	\[
	 \|x_{k+1}-x_*\|_b \le\Theta\|x_{k}-x_*\|_b,
	\]
	with a rate
	\begin{equation}\label{eq:contraction-rate}
	 \Theta = \frac{\kappa-1}{\kappa+1}. 
	\end{equation}
	\end{theorem}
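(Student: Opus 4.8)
The plan is to recognise Algorithm~\ref{alg:AbstractGM} as a textbook preconditioned gradient method on the Hilbert space $(X/W,\langle\cdot,\cdot\rangle)$ and to reduce the claimed rate to the classical Kantorovich estimate, with the condition number read off from~\eqref{eq:condition}. By the equivalence established in the preceding proposition it suffices to analyse Algorithm~\ref{alg:AbstractGM}: we have $x_k\in[\xi_k]$, and since $x_k-x_*\in V$ the energy error coincides with the quotient energy norm, $\|x_k-x_*\|_b=\|[\xi_k]-[\xi_*]\|_b$, so a contraction bound for the $[\xi_k]$ transfers verbatim to the $x_k$.

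First I would fix the two bilinear forms at play. The method minimises the quadratic $f$ on $X/W$, whose second derivative is the energy inner product $a([v],[w]):=f''([v],[w])=b(\Proj v,\Proj w)$, so that $a([v],[v])=\|[v]\|_b^2$; the metric~\eqref{eq:preconditioner} plays the role of the preconditioner inner product. Under the hypotheses of Lemma~\ref{lem:scalar-product-welldefined} the latter is a genuine inner product on $X/W$, and positive definiteness of $q$ makes $a$ one as well, so both forms are admissible. Introducing the self-adjoint operator $T$ on $(X/W,\langle\cdot,\cdot\rangle)$ defined by $\langle T[u],[v]\rangle=a([u],[v])$, the direction~\eqref{eq:gradient-step} is exactly the negative preconditioned gradient $-T([\xi]-[\xi_*])$, and~\eqref{eq:stepsize-SDM-XW} is the corresponding exact line search. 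This is precisely preconditioned steepest descent.

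Next I would invoke the standard convergence result~\cite{iterativeMethodsForSparseLinearSystems}, whose core is the Kantorovich inequality applied to $T$: exact-line-search steepest descent contracts the energy error by
\[
\|[\xi_{k+1}]-[\xi_*]\|_b \le \frac{\kappa_T-1}{\kappa_T+1}\,\|[\xi_k]-[\xi_*]\|_b,
\]
where $\kappa_T=\lambda_{\max}/\lambda_{\min}$ is the spectral condition number of $T$, i.e.\ the ratio of the extreme values of the Rayleigh quotient $a([v],[v])/\langle[v],[v]\rangle$. It remains to identify $\kappa_T$. Because $X=V\oplus W$, every class has a unique representative $v\in V$ with $\Proj v=v$, whence the Rayleigh quotient equals $b(v,v)/\min_{w\in[v]\cap\tilde V}\tilde b(w,w)$; by~\eqref{eq:condition} this lies in $[\gamma,\Gamma]$ for all $v\in V$, so $\lambda_{\min}\ge\gamma$ and $\lambda_{\max}\le\Gamma$, giving $\kappa_T\le\Gamma/\gamma=\kappa$. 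Monotonicity of $t\mapsto(t-1)/(t+1)$ then yields the rate~\eqref{eq:contraction-rate}, and transferring back to $x_k$ finishes the proof.

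The main work is not the Kantorovich estimate itself, which is standard, but making the abstract quotient setting fit its hypotheses: checking that $\langle\cdot,\cdot\rangle$ and $a$ are genuine inner products, that~\eqref{eq:gradient-step}--\eqref{eq:stepsize-SDM-XW} realise preconditioned steepest descent with exact line search, and that the rate-controlling condition number is exactly $\Gamma/\gamma$ via the canonical isomorphism $X/W\cong V$. If a self-contained argument is preferred to citing the spectral bound, the Kantorovich step can be established directly on the two-dimensional span of the current error and gradient, using only the norm equivalence~\eqref{eq:condition} and thereby avoiding any eigendecomposition in the possibly infinite-dimensional space $X/W$.
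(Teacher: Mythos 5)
Your proposal is correct and follows essentially the same route as the paper, which proves the theorem by citing the classical energy-norm contraction of preconditioned gradient methods~\cite{iterativeMethodsForSparseLinearSystems}, identifying the condition number via the norm equivalence~\eqref{eq:condition}, and transferring back to $x_k$ through $\|x_k-x_*\|_b=\|\Proj(x_k-x_*)\|_b$ together with the equivalence proposition. You merely fill in the details the paper leaves to the citation (the operator $T$, the Rayleigh-quotient identification under $X/W\cong V$, and the Kantorovich step), which is a sound elaboration rather than a different argument.
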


	\begin{remark}
	 Alternatively,  Algorithm~\ref{alg:PDP} could be accelerated to a conjugate gradient method. In our main application below, however, $\kappa$ is moderate or small, so the effect of acceleration is also very small. 
     Compared to cg, the pure gradient method has the advantage that it allows the flexible or inexact application of preconditioners, which can be exploited by solving \eqref{eq:problem2} only up to a certain accuracy. 
    \end{remark}

    Before we apply PDP to the optimal control problem~\eqref{linearQuadraticOptimalControlProblem}, we will consider some instructive examples. 
    First, to obtain some understanding of the concept, let us discuss a very simple concrete example. 
	\begin{example}[Orthogonal projection, codimension 1]
	%Consider for illustration the case $X$, equipped with a scalar product given by $b$:
	%\[
	%\langle v,v\rangle=\|v\|^2=b(v,v)=\tilde b(v,v).
	%\]
	Consider $X$ equipped with the norm $\|x\|_b^2 := b(x,x)$ and assume $\tilde b = b$.
	Let $V=n^\perp$, $W=\mathrm{span}\, n$, and $\tilde V = \tilde n^\perp$, where $W$ and $\tilde n$ enclose the acute angle $\theta \in [0,\pi/2\mathclose [$. 
	
	Otherwise, $W\subset \tilde V$ and hence $W+\tilde V \not \supset V$. Thus, $\theta=\pi/2$ means that the subspace condition~\eqref{eq:wd} is violated. 

	{
	
	\hfill
	\includegraphics[width=0.4\textwidth]{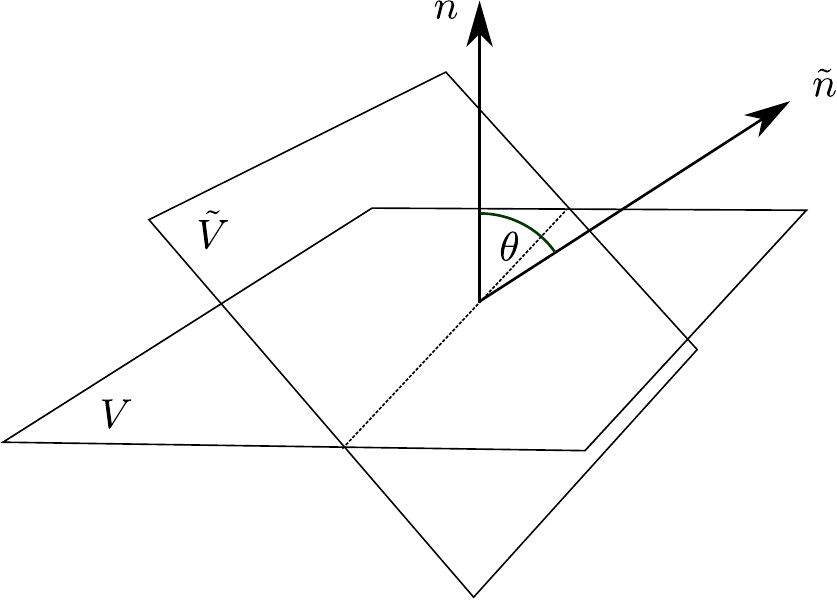}
    \hfill
	}	

If $\theta \in [0,\pi/2[$, then $[v]\cap \tilde V$ contains exactly one element, the intersection of $v+W$ and $\tilde V$, called $\tilde v=v+w$, where $w\in W$. Thus, \eqref{eq:condition} simplifies to
	\begin{align*}
	\gamma \|\tilde v\|_b^2 \le \|v\|_b^2 \le \Gamma \|\tilde v\|_b^2 \quad \forall v\in V.
	\end{align*}
Since $w\perp v$, we conclude $\|\tilde v\|_b\cos \theta \le \|v\|_b \le \|\tilde v\|_b$ and  obtain
\[
 \gamma = \cos^2\theta, \quad \Gamma=1, \quad \text{and} \;\; \kappa = \frac{1}{\cos^2 \theta}.
\]
%This holds in general, as long as $W\perp V$. 
%In our case of codimension 1 we have $0 \le \phi \le \theta$ with $\phi=0$ for $v\in V\cap \tilde V$ and $\phi=\theta$ if $v=\Proj\tilde n$, and thus for $d\ge 3$ we get
%	\[
%	\kappa = \frac1{\cos^2 \theta}.
%	\]
	Hence, the closer $V$ and $\tilde V$, the smaller $\theta$, the faster the convergence of our algorithm. If $\theta \to \pi/2$, then $\kappa \to \infty$. 
	\end{example}

	As a second example, which highlights the connection to augmented Lagrangian methods, we consider the case where constraints are dropped:
	
	\begin{example}
	Assume that $\tilde V \supset V$ and $\tilde b=b$, i.e., we drop constraints (the extreme case $\tilde V=X$ is included). 
	Let us furthermore consider the orthogonal projection $\Proj$ onto $V$ with $W=V^\perp$.
	Since $v\in V \Rightarrow v\in \tilde V$, one side of the norm equivalence~\eqref{eq:condition} is trivial:
	\[
	\inf_{\tilde v\in [v]\cap \tilde V}b(\tilde v,\tilde v) \le b(v,v) \quad \Rightarrow \quad \gamma=1.
	\]
	Also, by orthogonality, we have $\|v+w\|^2 = \|v\|^2+\|w\|^2$ for all $w\in W$
	and thus
	\[
	\inf_{\tilde v\in [v] \cap \tilde V}\|\tilde v\|^2 = \|v\|^2.
	\]
	Let us now assume that $b$ is continuous and elliptic on $\tilde V$, i.e.
	\[
	\alpha \|\tilde v\|^2 \le b(\tilde v,\tilde v) \le M \|\tilde v\|^2 \quad \forall \tilde v\in \tilde V
	\]
	holds for some $0<\alpha \le M <\infty$. For $v\in V\subset\tilde V$ we can estimate
	\[
	b(v,v) \le M \|v\|^2 = M \inf_{\tilde v\in [v]\cap \tilde V} \|\tilde v\|^2 \le \frac{M}{\alpha}\inf_{\tilde v\in [v]\cap \tilde V} b(\tilde v,\tilde v)
	\]
	and obtain $\Gamma \le M/\alpha$. We thus get the condition number bound
	\[
	\kappa \le \frac{M}{\alpha}.
	\]
	
	If $b$ is only elliptic on $V$, we may use an additional penalty term to get or to improve the required ellipticity. 
	In this case and for $\tilde V=X$ the PDP algorithm resembles an augmented Lagrangian method (cf. e.g. \cite{MR690767}). However, augmented Lagrangian methods usually rely on penalization of constraints, but dispense with a primal projection. 
	\end{example}

    \subsubsection*{Error estimate and termination}
	Let $(x_*,\lambda_*)$ be the solution to \eqref{exactSystem} and $(x_k, \lambda_k)$ the current iterate of the PDP algorithm~\ref{alg:PDP}.
	For a user-provided relative accuracy  $\Lambda_{\text{PDP}}$,
	we choose the criterion
	\begin{equation}
	\label{terminationCriterion0}
	\Vert x_* - x_k \Vert_b \leq \Lambda_{\text{PDP}} \Vert x_*-x_0 \Vert_b.
	\end{equation}
	The  solution $x_*$ is unknown, which requires suitable estimates for
	$\Vert x_*-x_0 \Vert_b$ and $\Vert x_* - x_k \Vert_b$. We will use the linear convergence of gradient methods to recall a standard error estimate, which works well in the case of fast linear convergence, i.e., contraction factors $\Theta \not\approx 1$.  
	
    In the case of linear convergence, a relation of the following form can be observed a-posteriori:
	\begin{equation}\label{eq:constantTheta}
	\Vert x_{k+1} - x_k \Vert_b = \Theta_{k} \Vert x_k -x_{k-1}\Vert_b \quad \Theta_{k} \in ]0,\Theta[.
	\end{equation}
	Of course, the upper bound $\Theta$ for the contraction as of~\eqref{eq:contraction-rate} is usually unavailable.
    In practice, the contraction factors $\Theta_k$ do not vary dramatically during the iteration, such that as a heuristic we may assume $\Theta_k=\bar\Theta$ is constant. Averaging over several steps yields the following a-posteriori estimate for the actual rate of convergence:
    \begin{equation}
     [\bar\Theta] := \left(\frac{\|x_k-x_{k-1}\|_b}{\|x_m-x_{m-1}\|_b}\right)^{1/(k-m)} \le \Theta
    \end{equation}
    The closer $\Theta_k$ is to $1$, the larger the distance $k-m$ should be taken. A reasonable preconditioner $\tilde b$ usually achieves a sufficiently small $\Theta$, such that $k-m=1$ is  appropriate. 
	
	Assuming that $\Theta_k \le [\bar\Theta]$ for all $k$, repeated application  of the orthogonality relation~\eqref{eq:Pythagoras} and the use of a geometric series implies
	\begin{align}\label{eq:gradientSum}
	 \|x_*-x_k\|_b^2 &= \sum_{i=1}^\infty \|x_{k+i}-x_{k+i-1}\|^2_b \le \sum_{i=1}^\infty [\bar\Theta]^{2i}\|x_{k}-x_{k-1}\|^2_b = \frac{[\bar\Theta]^2}{1-[\bar\Theta]^2} \|x_k-x_{k-1}\|^2_b,
	\end{align}
    which yields the computable error estimate
	\[
	 [\|x_*-x_k\|_b] := \frac{[\bar\Theta]}{\sqrt{1-[\bar\Theta]^2}} \|x_k-x_{k-1}\|_b
	\]
	in terms of the estimated contraction $[\bar\Theta]$. Using \eqref{eq:gradientSum} with $k=0$, we obtain asymptotically tight upper and lower estimates for $\|x_*-x_0\|_b^2$:
	\[
	 \sum_{i=1}^k \|x_{i}-x_{i-1}\|^2_b\le \|x_*-x_0\|_b^2 \le \sum_{i=1}^k \|x_{i}-x_{i-1}\|^2_b+\frac{\Theta^2}{1-\Theta^2} \|x_k-x_{k-1}\|^2_b.
	\]
    The left part yields the computable lower bound
    \[
     \lfloor \|x_*-x_0\|_b\rfloor := \sqrt{\sum_{i=1}^k \|x_{i}-x_{i-1}\|^2_b} \le \|x_*-x_0\|_b. 
    \]
    Finally, we obtain the computable termination criterion
	\begin{equation}
	\label{terminationCriterion}
	[\Vert x_* - x_k \Vert_b] \leq \Lambda_{\text{PDP}} \lfloor\Vert x_*-x_0 \Vert_b\rfloor.
	\end{equation}

   \section{PDP with modified ppcg for PDE constrained optimization}

   In this section we describe, how to apply \Cref{alg:PDP}, together with ppcg, to the optimal control problem~\eqref{linearQuadraticOptimalControlProblem}. 
   First, we describe a practical implementation of ppcg for the modified problem $\tilde H \tilde z=s$. Then we elaborate how to apply PDP as an outer iteration and provide condition number estimates and a simple strategy for accuracy matching. 
   
	\subsection{Application of ppcg to a modified problem}

     As indicated before, we replace the operator $H$ by a modified operator $\tilde H$, replacing $A$ by $\tilde A$ and $C$ by $\tilde C := \begin{pmatrix} \tilde A & -B\end{pmatrix}$ :
	\begin{equation}\label{eq:OCSurrogate}
	\tilde H :=	\begin{pmatrix}
	     M & \tilde C^*\\
	     \tilde C & 0
	  \end{pmatrix}
:=\begin{pmatrix}
	M_y & 0 & \tilde A^* \\
	0 &  M_u & -B^* \\
	\tilde A & -B & 0
	\end{pmatrix}
	\end{equation}
	This means that we minimize the given quadratic functional $q$ on the modified subspace $\tilde V := \ker \tilde C = \ker (\tilde A, -B)$ instead of on $V=\ker C=\ker (A, -B)$. Below, we will discuss, how $\tilde A$ can be defined implicitly as the exact inverse of the action of a linear iterative solver, applied to \eqref{equationBlockPreconditioner1} and \eqref{equationBlockPreconditioner3}. 
	
    Then, ppcg can be applied to the modified system $\tilde H z = s$. Employing a preconditioner $\tilde M_u$ for $M_u$, we use the constraint preconditioner
	\begin{equation}\label{eq:ppcg-preconditioner}
	\tilde Q:= \begin{pmatrix}
	     \tilde M & \tilde C^*\\
	     \tilde C & 0
	  \end{pmatrix}:=\begin{pmatrix}
	0 & 0& \tilde A^* \\
	0 & \tilde M_u & -B^* \\
	\tilde A & -B & 0
	\end{pmatrix},
	\end{equation}	
	which operates exactly on the modified constraint space $\tilde V$ instead of the original constraint preconditioner $Q$ from~\eqref{eq:exact-constraint-preconditioner}. There are various possibilities to choose $\tilde A$ such that $\tilde A^{-1}$ and $\tilde A^{-*}$ can be evaluated exactly and efficiently. We will discuss the case of an elliptic equation in the following section. 
	
     Next, we discuss the application of ppcg \Cref{algorithmppcg} to the modified problem  involving $\tilde H$. In many instances, $\tilde A$ is not available, but the application of its inverse $\tilde A^{-1}$ to a vector is. Hence, we have to find an indirect way to compute the application of $\tilde H$ to the search direction $d$ needed in line 7 of the ppcg \Cref{algorithmppcg}. Fortunately, this is possible by a recursive formula.  
	
	Consider the following splitting, which takes into account that $\tilde A d_y-Bd_u=0$: 
	\begin{equation*}
	\tilde{H} d = 
	\begin{pmatrix}
	M_y & 0 & \tilde{A}^*\\
	0 & M_u &-B^*\\
	\tilde{A} & -B& 0
	\end{pmatrix}
	\begin{pmatrix}
	d_y \\
	d_u \\
	d_p
	\end{pmatrix}
	=
	\begin{pmatrix}
	M_y d_y \\
	M_u d_u-B^*d_p\\
	0
	\end{pmatrix}
	+
	\begin{pmatrix}
	\tilde A ^* d_p\\
	0\\
	0 
	\end{pmatrix}.
	\end{equation*}
	Observe that the first vector on the right hand side can be evaluated, while the second vector would require the application of $\tilde A^*$, which is not available. However, application of the preconditioner (line 8 of \Cref{algorithmppcg}: $g_k=-\tilde Q^{-1} r_k$) implies
	\[
	 \tilde A^* g_{k,p}=-r_{k,y}.
	\]
    Due to $d_{k+1}=g_{k}+\beta_{k}d_k$ we obtain
    \begin{align*}
     \tilde A ^* d_{k+1,p}&=\tilde A^* g_{k,p}+\beta_{k}\tilde A^* d_{k,p}\\
     &= -r_{k,y}+\beta_{k}\tilde A^* d_{k,p}.
    \end{align*}    
    Hence, for the new variable $w_k := \tilde A^* d_{k,p}$ we obtain the recursion
    \[
     w_{k+1}=-r_{k,y}+\beta_{k}w_k.
    \]
    Since $d_0:=g_0$, the recursion is started with $w_0=-r_{0,y}$. 
    Now we can compute $\tilde H d_k$ in line~7 of \Cref{algorithmppcg} as 
	\begin{equation*}
	\tilde{H} d_k=
	\begin{pmatrix}
	M_y d_{k,y}+w_k \\
	M_u d_{k,u}-B^*d_{k,p}\\
	0
	\end{pmatrix}.
	\end{equation*}
    Finally, for a given initial guess $x_0$, such that $\tilde A y_0-Bu_0=0$, we start with $z_0=(z_{0,x},0)$, i.e. $z_{0,p}=0$, so that $\tilde H z_0$ can be evaluated without evaluation of $\tilde A$ and $\tilde A^*$. In most cases $z_0=0$ is used. 
    
	In summary, we have constructed a modified ppcg method that operates on $\ker \tilde C$.
	The resulting algorithm is presented in \Cref{algorithmModifiedppcg}.
	\begin{algorithm}[h]
		\caption{Modified ppcg without evaluation of $\tilde A$}
		\label{algorithmModifiedppcg}
		\begin{algorithmic}		
			\State \textbf{Solve:} $\tilde H z= s$
			\State \textbf{Intup} initial iterate $z_0 = \begin{pmatrix}
			x_0, 0 \end{pmatrix}$ satisfying $\tilde Cx_0 = 0$
			\State \textbf{Initialize: } $r_0 = \tilde Hz_0- s$, $d_0 = g_0 = -\tilde Q^{-1}r_0$, $w_0=-r_{0,y}$
			\For{$k=0,1,2\dots$ }
			\State $h_{k,x} \leftarrow 	M d_{k,x}+\begin{pmatrix}
	w_k \\
	-B^*d_{k,p}
	\end{pmatrix}$
			\State $\alpha_k \leftarrow -\displaystyle\frac{\dup{r_k}{g_k}}{\dup{(M d_{k,x})}{d_{k,x}}}$
			\State $z_{k+1}  \leftarrow  z_k + \alpha_k d_k$
			\State  $ r_{k+1,x} \leftarrow r_{k,x} + \alpha_k h_{k,x}$ 
			\State  $ g_{k+1} \leftarrow  -\tilde Q^{-1} r_{k+1}$ 
			\State $\beta_{k} \leftarrow \displaystyle\frac{\dup{r_{k+1}}{g_{k+1}}}{\dup{r_k}{g_k}}$
			\State $d_{k+1} \leftarrow g_{k+1} + \beta_{k} d_k$
			\State $w_{k+1} \leftarrow -r_{k+1,y}+\beta_{k}w_k$
			\EndFor1975
		\end{algorithmic}
	\end{algorithm}

	\subsection{Application of PDP}
    We are now ready to solve the constrained optimization problem~\eqref{linearQuadraticOptimalControlProblem}, as a specific form of~\eqref{eq:problem} and corresponding to $Hz=s$, with the PDP \Cref{alg:PDP} by using the auxiliary problem~\eqref{eq:problem2}, corresponding to $\tilde Hz=s$.

	Let us specify, in detail, how to apply  \Cref{alg:PDP} in the context of optimal control. This will result in Algorithm~\ref{alg:PDPOC}, below. Since the objective function $J$ is already quadratic, we have $q=J$. Also, we use $\tilde q = q$ and consequently $\tilde b = b$. Thus, the original problem~\eqref{eq:problem}  and the surrogate problem~\eqref{eq:problem2}  only differ in the choice of spaces here: $\tilde V \neq V$. 
	
	The original linear subspace $V$ is given as
	\[
	V=\{(y,u) \in X \mid Ay-Bu=0\}=\{(Su,u) \mid u\in U\}
	\]
	in terms of the solution operator or control-to-state mapping $S=A^{-1}B$.
	
		The modified subspace $\tilde V$ results from replacing $A^{-1}$ by $\tilde A^{-1}$, 
	\[
	\tilde V=\{(y,u) \in X \mid \tilde Ay-Bu=0\}=\{(\tilde S u,u) \mid u\in U\},
	\]
	with the inexact control-to-state mapping $\tilde S:= \tilde A^{-1}B$, which is realized by a preconditioner or iterative linear solver. 
	
	We define the required (non-orthogonal) projection
	\begin{equation}
	\label{projection}
	 \Proj x=\begin{pmatrix} Su  \\ u \end{pmatrix}.
	 \end{equation}
	This can be written in block operator form as
	\[
	\Proj  = \begin{pmatrix}
	0 & S\\
	0 & I_U
	\end{pmatrix} \qquad \Rightarrow \qquad (I_X-\Proj )^* =\begin{pmatrix}
	I_{Y^*} & 0\\
	-S^* & 0
	\end{pmatrix}.
	\]
	For the nullspace of $\Proj $, we obtain 
	\[
	W = Y\times \{0\}=\{ (y,0) \in Y \times U \}. 
	\]
	Assumption~\ref{as:subspace} is satisfied due to
	\[
	W+\tilde V = \{ (y+\tilde Su,u) \mid y\in Y, u\in U\} = X.
	\]

	The adjoint $(I_X-\Proj )^*$ can be applied to $q'(x)$ by first computing the adjoint state $p$ via
\begin{equation}
\label{computationAdjoint}
	A^* p = - q'_y(x)
	\end{equation}
	and then computing the Lagrange multiplier, defined in~\eqref{eq:lambda-def} as $\lambda(x) = -(I_X-\Proj)^* q'(x)$,1975 via
	\[
	\lambda = \begin{pmatrix}
	A^* p\\
	-B^* p
	\end{pmatrix}
	= - \begin{pmatrix}
	q'_y(x)\\
	B^* p
	\end{pmatrix}.
	\]

	The solution of the system $\tilde H z=s$, where $\tilde H$ is given in \eqref{eq:OCSurrogate} realizes the solution of the surrogate problem~\eqref{eq:problem2} and thus the application of the preconditioner~\eqref{eq:preconditioner}. 
	We observe that for any $x=(y,u)\in X$, the set $[x]\cap \tilde V$ contains only the single element $\tilde x = (\tilde S u,u)\in \tilde V$.
	Hence, in view of~\eqref{eq:preconditioner} we obtain the preconditioner:
	    \[
        \langle [x],[x]\rangle=\inf_{\tilde x\in [x]\cap \tilde V} \tilde b(\tilde x,\tilde x)=b(\tilde x,\tilde x)=\dup{(M\tilde x)}{\tilde x}=\dup{(M_y \tilde S u)}{\tilde Su}+\dup{(M_uu)}{u}=\|\tilde Su\|_G^2+\|u\|^2_U,
      \]
      for $b(x,x)$ on $V$, where $x=(Su,u)\in V$:
      \[
        b(x,x)=\dup{(M x)}{x}=\dup{(M_y S u)}{Su}+\dup{(M_uu)}{u}=\|Su\|_G^2+\|u\|^2_U.
      \]
	
	%As for the corresponding bilinear forms we  have the splitting:
	%\[
	%\tilde b(x,x)=b(x,x)=(Mx)(x)=(M_yy)(y)+(M_uu)(u)
	%\]
% 	If we set $y:=Su:=A^{-1}Bu$, then we can write for $x=(Su,u) \in V$:
% 	\[
% 	q(x)=q_y(Su)+q_u(u)
% 	\]
%    The preconditioner $\tilde M$ (realizing the metric~\eqref{eq:preconditioner} in the abstract steepest descent method) is defined by
%    Since $[v]\cap \tilde V$ consists just of the single element $\tilde x = (\tilde Su,u)$, we obtain
%    \[
%        \dup{\tilde M(x)}{x} = \dup{(M_y \tilde S u)}{\tilde S u} + \dup{(M_u u)}{u}.
%    \]
 	Finally, the step length $\omega_k$ for a given update $\delta x_k$ is computed as
	\begin{equation}\label{eq:stepOC}
	 \omega_k  = - \frac{\dup{(M x_k+C^* p_k- s_x)}{\delta x_k}  }   
			{\dup{(M \delta x_k)}{\delta x_k}}
	\end{equation}

	This yields in summary \Cref{alg:PDPOC} where we introduce the residual vector
	\[
	 r_k = \begin{pmatrix}
	        r_{k,x}\\
	        r_{k,p}
	       \end{pmatrix}
	       =
	       \begin{pmatrix}
	        r_{k,y}\\
	        r_{k,u}\\
	        r_{k,p}
	       \end{pmatrix}
        = \begin{pmatrix}
           M x_k + C^* p_k-s_x\\
           C x_k
          \end{pmatrix}=Hz_k-s.
	\]
		\begin{algorithm}[h]
		\caption{PDP for optimal control.}
		\label{alg:PDPOC}
		\begin{algorithmic}[1]		
			\State \textbf{Solve:} $Hz=s\quad $ i.e. $\quad \begin{pmatrix} M & C^*\\ C & 0\end{pmatrix}  \begin{pmatrix}
			x   \\
			p  
			\end{pmatrix} = \begin{pmatrix}
			s_x   \\
			0 
			\end{pmatrix}$, $x=(y,u),\; Cx=Ay-Bu$
			%	\State \textbf{Setting:}  initial iterate $z_0$ (y_0,u_0,p_0)^*$, right hand site $b = (b_y, b_u, 0)^*$.	
			\State \textbf{Input:}  initial iterate $z_0 =\begin{pmatrix} x_0, p_0  \end{pmatrix}= \begin{pmatrix} y_0 , u_0, p_0  \end{pmatrix}$ satisfying $C x_0 = 0$.

			\State $r_0 \leftarrow Hz_0-s$
			\For{$k=0,1,2\dots$}
			\State Solve: $A^*  \delta p_A = -r_{k,y}$  \hspace{2cm} (by an adjoint PDE solver)
			\State $p_{k+1} \leftarrow p_k+\delta p_A$
			
			\State $r_{k,x} \leftarrow r_{k,x} +C^* \delta p_A$
			\State Solve: $\tilde H \delta z_H =  \begin{pmatrix}
			-r_{k,x}   \\
			0 
			\end{pmatrix}  $ \hspace{2.2cm} (by ppcg \Cref{algorithmModifiedppcg})

			\State Solve: $ A \delta y_A =  -(r_{k,p}+C\delta x_H)$  \hspace{1cm} (by a PDE solver)

			\State $\delta x \leftarrow\begin{pmatrix}\delta y_A+\delta y_H,\delta u_H \end{pmatrix}$
			\State $\omega \leftarrow \displaystyle\frac{-\dup{r_{k,x}}{\delta x}}{\dup{(M \delta x)}{\delta x}}$
			\State $x_{k+1} \leftarrow x_k + \omega\delta x $ 
			\State $r_{k+1,x} \leftarrow r_{k,x} + \omega M\delta x$
			\State $r_{k+1,p} \leftarrow r_{k,p} + \omega C\delta x$
			\EndFor
		\end{algorithmic}
	\end{algorithm}
	
    \begin{remark}
     Let us comment on a few implementation details of Algorithm~\ref{alg:PDPOC}:
     \begin{itemize}
     \item Algorithm~\ref{alg:PDPOC} is formulated, such that the systems in line 5,8, and 9 can be solved iteratively and inexactly. The updates of $r_k$ are implemented in a way, such that the algorithm corrects this inexactness during the outer iteration. In particular, $r_{k,x}$ always includes the newest information on the Lagrangian multiplier $p$. This is crucial for the robustness of the algorithm with respect to numerical instabilities and inexact computation of the projection step. It is well known that updating the Lagrange multiplier is necessary to obtain $r_k \to 0$ as $k\to \infty$ in iterative methods for constrained problems. The inclusion of $r_{k,p}$ in line 9 avoids a drift-off, if the system in line 9 is solved inexactly. 
     
     \item Sometimes it is desirable to avoid application of $A$ and $A^*$ during Algorithm~\ref{alg:PDPOC}, for example, if $A^{-1}$ and $A^{-*}$ are given exactly via a time-stepping scheme. This can be achieved by simple modifications of lines 7,9,10, and 14. If $A^{-1}$ is available exactly, we conclude that $r_{k,p}=0$ throughout the iteration, so line 14 can be skipped.  Line~9 can then be replaced by the equation $\delta y_+\leftarrow A^{-1}B\delta u_H$, and line~10 accordingly by $\delta x\leftarrow(\delta y_+,\delta u_H)$.  Line 7 can be modified as follows: we may update $r_{k,y} \leftarrow 0$, using $A^*\delta p_A+r_{k,y}=0$ due to line 5, and $r_{x,u} \leftarrow r_{x,u}-B^*\delta p_A$. 
     \end{itemize}
    \end{remark}

		\subsubsection*{Condition number estimates}
	
	To give a more concrete estimate for the condition number, we recall from~\eqref{linearQuadraticOptimalControlProblem} that  $b(x,x) = (M x)x = \|y\|_{G}^2+\nu \|u\|_{U}^2$ with $y=Su$.
We would like to obtain bounds $\Gamma\ge \gamma > 0$ for the norm equivalence~\eqref{eq:condition}, which here reads
	  \begin{equation}\label{eq:Mest}
	   \gamma \dup{(M \tilde x)}{\tilde x} \le \dup{(M x)}{x} \le \Gamma\dup{(M \tilde x)}{\tilde x} \quad \forall x=(Su,u)\in V, \mbox{ where } \tilde x=\Proj x = (\tilde Su,u)
%	   \gamma\dup{(M \tilde x)\tilde x \le (M x)x \le \Gamma(M\tilde x)\tilde x, \quad \forall x\in V:  x=\mathcal P \tilde x.
	  \end{equation}
     Then the condition number $\kappa = \Gamma/\gamma$ governs the speed of convergence of PDP by Theorem~\ref{thm:speed}.

	\begin{proposition}\label{pro:cond_optctrl}
	  The condition number $\kappa$ is bounded by
	 \begin{align*}
	    \kappa \le (1+\nu^{-1}\|S\|^2_{U\to G})(1+\nu^{-1}\|\tilde S\|^2_{U\to G}).
     \end{align*}
      If in addition 
		\begin{equation}\label{eq:relerror2}
		\|\tilde  Su- Su\|_{G} \le \varepsilon \|u\|_{U} \quad\forall u\in U
		\end{equation}
	    holds, i.e. $\|\tilde S-S\|_{U\to G} \le \varepsilon$,
		then the condition number is bounded by
	 \begin{align*}
	    \kappa \le \left(1+\frac{\varepsilon}{\sqrt{\nu}}+\frac{\varepsilon^2}{\nu}\right)^2.
     \end{align*}
	\end{proposition}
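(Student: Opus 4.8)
The plan is to reduce everything to the explicit expressions for the two energy forms restricted to $V$. Writing $x=(Su,u)\in V$ and $\tilde x=\Proj x=(\tilde Su,u)$, the formulas already derived give $\dup{(Mx)}{x}=\|Su\|_G^2+\nu\|u\|_U^2$ and $\dup{(M\tilde x)}{\tilde x}=\|\tilde Su\|_G^2+\nu\|u\|_U^2$. Since both are parametrised by the single variable $u\in U$, the norm equivalence~\eqref{eq:Mest} is a purely scalar comparison of the quantities $a:=\|Su\|_G$, $\tilde a:=\|\tilde Su\|_G$ and $d:=\|u\|_U$, and the condition number factorises as $\kappa=\Gamma/\gamma$ with $\Gamma=\sup_u (a^2+\nu d^2)/(\tilde a^2+\nu d^2)$ and $\gamma=\inf_u (a^2+\nu d^2)/(\tilde a^2+\nu d^2)$. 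Crucially, $\gamma^{-1}$ is obtained from $\Gamma$ by exchanging the roles of $S$ and $\tilde S$, so in each case it suffices to produce an upper bound for the ratio and then invoke this symmetry.

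For the first, coarse bound I would estimate the numerator by the operator norm and bound the denominator from below by its mass term: $a^2+\nu d^2\le(\|S\|_{U\to G}^2+\nu)d^2=(1+\nu^{-1}\|S\|_{U\to G}^2)\,\nu d^2\le(1+\nu^{-1}\|S\|_{U\to G}^2)(\tilde a^2+\nu d^2)$, which gives $\Gamma\le 1+\nu^{-1}\|S\|_{U\to G}^2$. Swapping $S\leftrightarrow\tilde S$ yields $\gamma^{-1}\le 1+\nu^{-1}\|\tilde S\|_{U\to G}^2$, and multiplying the two bounds gives the claimed product estimate for $\kappa$.

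The refined bound is where the real work lies, and the key is to exploit the triangle inequality $|\,\|\tilde Su\|_G-\|Su\|_G\,|\le\|\tilde Su-Su\|_G\le\varepsilon d$ from~\eqref{eq:relerror2} together with a Young-type splitting rather than the crude operator-norm estimate. After normalising $d=1$, I would expand $a^2\le(\tilde a+\varepsilon)^2=\tilde a^2+2\varepsilon\tilde a+\varepsilon^2$ and then control the cross term by $2\varepsilon\tilde a=\frac{\varepsilon}{\sqrt\nu}\,(2\sqrt\nu\,\tilde a)\le\frac{\varepsilon}{\sqrt\nu}(\tilde a^2+\nu)$ and the constant term by $\varepsilon^2=\frac{\varepsilon^2}{\nu}\nu\le\frac{\varepsilon^2}{\nu}(\tilde a^2+\nu)$. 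Collecting terms yields $a^2+\nu\le(1+\tfrac{\varepsilon}{\sqrt\nu}+\tfrac{\varepsilon^2}{\nu})(\tilde a^2+\nu)$, hence $\Gamma\le 1+\tfrac{\varepsilon}{\sqrt\nu}+\tfrac{\varepsilon^2}{\nu}$. By the same symmetry $\gamma^{-1}\le 1+\tfrac{\varepsilon}{\sqrt\nu}+\tfrac{\varepsilon^2}{\nu}$, so that $\kappa=\Gamma/\gamma\le(1+\tfrac{\varepsilon}{\sqrt\nu}+\tfrac{\varepsilon^2}{\nu})^2$ follows.

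The main obstacle is obtaining the \emph{sharp} constant in the refined estimate. A naive bound $\tilde a\le\|\tilde S\|_{U\to G}\,d$ on the cross term only reproduces a factor of the form $(1+\varepsilon/\sqrt\nu)^2$ per side and hence the weaker $(1+\varepsilon/\sqrt\nu)^4$. The Young splitting $2\varepsilon\tilde a\le\frac{\varepsilon}{\sqrt\nu}(\tilde a^2+\nu)$, which measures $\tilde a$ against the mass term $\nu$ that is actually present in the denominator, is exactly what yields the advertised factor $1+\varepsilon/\sqrt\nu+\varepsilon^2/\nu$ on each side; identifying this splitting and handling the constant term $\varepsilon^2$ consistently is the crux of the argument.
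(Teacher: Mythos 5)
Your proof is correct and takes essentially the same route as the paper's: the coarse bound comes from estimating the numerator by the operator norm of $S$ and the denominator from below by the mass term $\nu\|u\|_U^2$, and the refined bound from the triangle inequality combined with the Young splitting $2\varepsilon\|\tilde Su\|_G\|u\|_U \le \frac{\varepsilon}{\sqrt\nu}\left(\|\tilde Su\|_G^2+\nu\|u\|_U^2\right)$, with the $S\leftrightarrow\tilde S$ symmetry supplying the bound on $1/\gamma$ in both cases. Your normalisation $\|u\|_U=1$ and the scalar notation $a,\tilde a,d$ are only cosmetic differences from the paper's presentation.
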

	\begin{proof}
	    We have to derive bounds for $\gamma$ and $\Gamma$ in \eqref{eq:Mest}.
	
	    Exploiting $\dup{(M \tilde x)}{\tilde  x} = \|\tilde S u\|_G^2 + \nu \|u\|_U^2 \ge \nu \|u\|_U^2$, we estimate for $u\ne 0$ on one hand
	    \[
	    \frac{\dup{(Mx)}{x}}{\dup{(M\tilde  x)}{\tilde  x}} \le \frac{\|Su\|_G^2+\nu\|u\|_U^2}{\nu\|u\|_U^2} 
	    \le \nu^{-1}\|S\|_{U\to G}^2 +1 =: \Gamma,
	    \]
	    and on the other hand,
	    \[
	        \frac{\dup{(M x)}{x}}{\dup{(M\tilde  x)}{\tilde x}} \ge \frac{\nu\|u\|_U^2}{\|\tilde S u\|_G^2 + \nu\|u\|_U^2}
	        \ge \frac{1}{\nu^{-1}\|\tilde S\|_{U\to G}^2+1} =: \gamma,
	    \]
	    obtaining $\kappa \le (1+\nu^{-1}\|S\|^2_{U\to G})(1+\nu^{-1}\|\tilde S\|^2_{U\to G})$.
	   
	    Now, let $y=Su$ and $\tilde y=\tilde Su$, and assume $\|\tilde y-y\|_G \le \varepsilon \|u\|_U$. Then we can compute, using Young's inequality $2ab \le \nu^{-1/2}a^2+\nu^{1/2}b^2$:
	    \begin{align*}
	       \|y\|_G^2 &\le (\|y-\tilde y\|_G+\|\tilde y\|_G)^2
	        = \|\tilde y\|_G^2+2\|\tilde y\|_G\|y-\tilde y\|_G+\|y-\tilde y\|_G^2\\
	       & \le \|\tilde y\|_G^2+2\|\tilde y\|_G\varepsilon \|u\|_U+\varepsilon^2\|u\|_U^2
	       \le \|\tilde y\|^2_G+\frac{\varepsilon}{\sqrt{\nu}}(\|\tilde y\|_G^2+\nu\|u\|_U^2)+\frac{\varepsilon^2}{\nu}\nu\|u\|_U^2.
	    \end{align*}
	    This yields the desired bound
	    \begin{align*}
	        \|y\|_G^2+\nu\|u\|_G^2 \le \left(1+\frac{\varepsilon}{\sqrt{\nu}}+\frac{\varepsilon^2}{\nu}\right)\left(\|\tilde y\|_G^2+\nu\|u\|_G^2 \right) =: \Gamma.
	    \end{align*}
      The same estimate can be shown with the roles of $y$ and $\tilde y$ switched, which yields the same bound for $1/\gamma$. This implies the second estimate on $\kappa$.
	\end{proof}
	\cref{pro:cond_optctrl} shows that a bounded condition  number of PDP for $\nu\to 0$ can be achieved, if the approximate solution operator $\tilde S$ is chosen such that $\varepsilon \sim \sqrt{\nu}$. 
\subsubsection*{Accuracy matching}
	
During application of the PDP Algorithm~\ref{alg:PDPOC}, the main computational steps can be performed by iterative solvers. Then, accuracy requirements have to be set for these inner solvers. In all cases, updates are computed with zero initial guess. Thus, relative error criteria of the form
\[
 \|\delta x_k-\delta x_*\| \le \Lambda \|\delta x_*\|,
\]
equipped with suitable norms, are appropriate. We need tolerances $\Lambda_{\tilde A}, \Lambda_{\text{ppcg}}, \Lambda_P$, and $\Lambda_{P^*}$ for the accuracy of $\tilde A$, the application of ppcg to the modified problem $\tilde H \delta z_H = (-r_{k,x},0)^T$, the primal projection, and the dual projection, respectively. We assume that the desired final tolerance $\Lambda_{\text{PDP}}$  is given for the relative energy error reduction as
\[
 \|x_k-x_*\|_M \le \Lambda_{\rm PDP}\|x_*-x_0\|_M. 
\]
We observe that $\Lambda_{\tilde A}, \Lambda_{\text{ppcg}}$, $\Lambda_P$, and $\Lambda_{P^*}$ mainly influence the quality of the updates and thus the number of outer iterations in PDP. Thus, these four quantities should be matched. The most straightforward idea, which we also applied in our implementation, is to choose them all equal to a common constant $\Lambda$. Then there is a trade-off between stringency of the corresponding inner computations and number of outer PDP iterations. In the numerical experiments in Section~\ref{sec:num-exp}, we observe that inner tolerances in the range $\Lambda  \sim 10^{-1}-10^{-3}$, which lead to only a few PDP iterations, are favourable concerning efficiency.

In some applications, feasibility with respect to the state equation enjoys priority, compared to optimality. In this case, $\Lambda_P$ can be chosen more stringently than the remaining constants. 

Let us discuss a discrepancy that occurs, concerning error measures. While the errors of ppcg and PDP can be measured in the same norm, namely $\|\delta x\|_M^2=\|\delta y\|_G^2+\nu\|\delta u\|_U^2$, which corresponds to the objective of the optimal control problem, the other three iterations, i.e., for the primal and dual projection and for $\tilde A$, concern the PDE under consideration. For them, natural error measures are usually taken with respect to a different norm, for example, an energy norm $\|\delta y\|_A$ in the case of an elliptic problem, which is usually stronger than $\|\delta y\|_G$. A relative error bound $\Lambda_{\tilde A}$ in the energy norm,  implies, using continuity of the identity $I_Y: (Y,\|\cdot\|_A)\to (Y,\|\cdot\|_G)$ and continuity of the solution operator $S:(U,\|\cdot\|_U)\to (Y,\|\cdot\|_A)$:
	\[
	\|y-\tilde y\|_G \le \|I_Y\|_{A,G}\|y-\tilde y\|_A \le \|I_Y\|_{A,G}\Lambda_{\tilde A} \|y\|_A \le  \Lambda_{\tilde A} \|I_Y\|_{A,G}\|S\|_{U,A}\|u\|_U.
	\]
Thus, for the solver accuracy condition~\eqref{eq:relerror2} of \Cref{pro:cond_optctrl} we obtain $\varepsilon \le c\Lambda_{\tilde A}$.

		\subsection{Comparison to MINRES based solvers}
	Recalling that the system \eqref{exactSystem} has saddle point structure, a range of alternative solvers can be applied~\cite{benzi_golub_liesen_2005}. However, as we will discuss, our method offers several advantages which makes it attractive for use in a certain class of problems.

	The MINRES method, originally developed in \cite{paige1975solution},   is one of the standard approaches to solve symmetric indefinite systems and has become a popular iterative solver for linear quadratic optimal control problems. A couple of preconditioners to be used with MINRES have been proposed~\cite{doi:10.1137/15M1018502,e729039adfe3490bb208102c48830119,rees2010preconditioning,doi:10.1137/16M1093021}. 
Two block diagonal preconditioners $Q_1$ (due to \cite{doi:10.1137/16M1093021}) and $Q_2$  (cf. \cite{rees2010preconditioning}), which we will discuss in the numerical experiments below, are given  as 
\begin{align} \label{eq:minres-preco}
    Q_1^{-1} := \begin{pmatrix}
                \tilde A^{-1} & & \\
                & \tilde M^{-1}_u & \\
                & & \tilde A^{-1}
    \end{pmatrix},\qquad 
    Q_2^{-1} := \begin{pmatrix}
                \tilde M_y^{-1} & & \\
                & \tilde M^{-1}_u & \\
                && \tilde A^{-1} M_y \tilde A^{-*}
    \end{pmatrix},
\end{align}
where again $\tilde A^{-1}$ is an approximation of $A^{-1}$ and $\tilde M_u$ is some preconditioner for  $M_u$.

We observe that $Q_1$ requires symmetry of $\tilde A$, and thus in particular $P=Y$. $Q_2$ can only be applied straighforwardly, if $\tilde M_y$ (or its discretization) is invertible, as in the case of complete observation. Also, from a functional analytic point of view, $Q_2$ shifts the spaces of $A$. For example, in the elliptic case with regular data, one has to consider $A: H^2(\Omega) \to L_2(\Omega)$ instead of $A:H^1(\Omega) \to H^1(\Omega)^*$. 

For some problem instances, in particular in the case of distributed control with complete observation on the domain, it is possible to modify $Q_2$ to a preconditioner that enjoys superior robustness with respect to small Tychonov parameters $\nu$. 
$A$ is replaced by a suitably chosen surrogate that includes the local effect of the control on the state~(cf. e.g. \cite{Zul2011,PearsonWathen2012,StollWath2012,schiela_ulbrich:2014}). 
In our context, this can be described by the choice
\[
  \tilde A :=(A+\nu^{-1/2}BE),
\]
where $E:Y \to U$ is some embedding, which exists in a natural way in some cases, e.g., if $U=L_2(\Omega), Y=H^1(\Omega)$.  

Beyond these differences, MINRES and ppcg differ in a couple of qualitative aspects. Most importantly, ppcg retains the structure of a constrained optimization problem. In contrast, MINRES, which solves a general indefinite linear system, cannot distinguish between descent and ascent directions. This difference can be important, if iterative solvers are used inside an SQP method for nonlinear optimization. In this context, inner iterative solvers are required to compute descent directions. Here, ppcg admits a couple of well known variants (e.g., truncated cg), in contrast to MINRES. 

As discussed, ppcg requires a constraint preconditioner, which defines the null space on which the problem is to be solved. This preconditioner does not need to be positive definite on the whole space, but only on the feasible subspace -- a very natural requirement, given the theoretical background of the problem under consideration. For example, our preconditioner $\tilde Q$ is only positive definite on $\ker (\tilde A, -B)$, but indefinite on $Y \times U \times P$. In contrast, preconditioners for MINRES have to be positive definite on the whole space, ignoring the subspace structure of the optimal control problem. This limits the  range of applications of MINRES with the above preconditioners.

Thus, to summarize, our new solver has its field of application, where the optimization structure of the problem plays an important role, in particular as an inner solver for nonlinear, potentially nonconvex problems. In some instances, however, MINRES admits a preconditioner with superior robustness properties in case of very small $\nu$. 
	
	\section{Application to Optimization with Elliptic PDEs}

	In the remaining parts of this paper we assume in addition that $P=Y$ and that $A:Y\to P^*$ corresponds to a symmetric, bounded and elliptic bilinear form $a:Y \times Y \to \R$, as in the case of linear elliptic PDEs. In this case, the approximate solution steps in line 5 and 9 of \Cref{alg:PDPOC} can be computed by application of a pcg-method applied to $A=A^*$. 

	\subsection{A Chebyshev Preconditioner}
	
	To construct $\tilde A^{-1}$ we employ the Chebyshev semi-iteration algorithm, which has been used in various contexts to construct polynomial preconditioners (cf. e.g. \cite{MR694525,MR801178}\cite[Sec. 12.3]{iterativeMethodsForSparseLinearSystems}). Here we can exploit the fact that approximate applications of $A^{-1}$ are performed repeatedly in Algorithm~\ref{alg:PDPOC}.
	
    For a symmetric positive definite (spd) matrix $A$, the Chebyshev iteration requires an spd preconditioner $Q_{A}$ (for example a multigrid preconditioner), and estimates for the smallest eigenvalue $\varsigma_{\text{min}}$ and the largest eigenvalue $\varsigma_{\text{max}}$ of $Q_A^{-1} A$, \cf \cite{chebyshev}. 
    It is a classical result that $(x_k-x_*)=p_k(Q_A^{-1}A)(x_0-x_*)$, where $p_k$ is a Chebyshev polynomial with $p_k(0)=1$, which is minimal on $[\varsigma_{\min},\varsigma_{\max}]$ among all polynomials of degree $k$. In particular, $p_k$ depends only on $k$, $\varsigma_{\text{max}}$, and $\varsigma_{\text{min}}$.  Thus, after fixing these three quantities, we obtain a fixed \emph{linear} mapping 
	\[
	 \tilde A^{-1} := p_k(Q_A^{-1}A),
	\]
    which is, for appropriately chosen $k$, a good approximation of $A^{-1}$. 
	
    If the spectrum of $Q_A^{-1} A$ is contained in the positive interval $[\varsigma_{\text{min}},\varsigma_{\text{max}}]$, this iteration converges with a linear rate
    \begin{equation}\label{eq:chebyest}
     \|x_k-x_*\|_A\le 2\left(\left(\frac{\sqrt{\kappa_{\rm C}}-1}{\sqrt{\kappa_{\rm C}}+1}\right)^k+\left(\frac{\sqrt{\kappa_{\rm C}}-1}{\sqrt{\kappa_{\rm C}}+1}\right)^{-k}\right)^{-1}\|x_0-x_*\|_A,\quad \kappa_{\rm C}:=\frac{\varsigma_{\text{max}}}{\varsigma_{\text{min}}}
    \end{equation}
     which is the same rate as the a-priori estimate for the pcg-method. 
     
     % This result follows via existence of a $Q_A$-orthogonal basis of eigenvectors of $A$ and the fact that $p_k$ maps all eigenvalues of $A$ to an interval $[-\theta,\theta]$, where $\theta$ is the constant given in \eqref{eq:chebyest}.

	To obtain the required accurate estimates for the eigenvalues $\varsigma_{\text{min}}$ and  $ \varsigma_{\text{max}}$ we observe that in \Cref{alg:PDPOC} a system of the form $A^* p_{k+1}=r$ has to be solved before \Cref{algorithmModifiedppcg} is applied to $\tilde H$. Since $A=A^*$ is spd in the setting considered here, this can be done by pcg applied to $A$ with the same preconditioner $Q_A$. We can use the connection of cg with the Lanczos iteration to estimate $\varsigma_{\text{min}}$ and  $ \varsigma_{\text{max}}$: The tridiagonal matrix 
	\begin{equation*}
	T_k:= \begin{pmatrix}
	\frac{1}{\alpha_0} & \frac{\sqrt{\beta_0}}{\alpha_0} & $ $ & $ $ & $  $ \\
	\frac{\sqrt{\beta_0}}{\alpha_0} & \frac{1}{\alpha_1}+ \frac{\beta_0}{\alpha_0} & \ddots & $ $ & $ $ & \\
%	$ $ & \cdot & \cdot & \cdot& $ $  \\
    $ $ & \ddots & \ddots & \frac{\sqrt{\beta_{k-1}}}{\alpha_{k-1}} \\
	 $ $ & $ $ & \frac{\sqrt{\beta_{k-1}}}{\alpha_{k-1}} & \frac{1}{\alpha_{k} } +\frac{\beta_{k-1}}{\alpha_{k-1}}
	\end{pmatrix}\in \R^{k+1\times k+1},
	\end{equation*}
	consisting of the quantities $\alpha_i$ and $\beta_i$ computed within the cg algorithm, is also produced by the Lanczos method applied to $A$ and preconditioned by $Q_A$ (cf.~\cite[Chapter 6]{iterativeMethodsForSparseLinearSystems}). The eigenvalues of $T_k$
	yield adequate estimates for the eigenvalues of $Q_A^{-1}A$.
	Assuming that $Q_A$ is a reasonable preconditioner, $Q_A^{-1}A$ is moderately conditioned, so the extreme eigenvalues of $T_k$ approximate the extreme eigenvalues of $Q_A^{-1}A$ well already for a moderate number $k$ of iterations (we observe a few tens), and can be computed with negligible effort by standard means. 
    The computed extreme eigenvalues can then be used to parametrize the Chebyshev iteration. Moreover, via \eqref{eq:chebyest} we can compute the smallest $k\in \N$ satisfying
   \[
     \|x_k-x_*\|_A \le \Lambda_{\tilde A}\|x_0-x_*\|_A
   \]
for a  desired relative accuracy $\Lambda_{\tilde A}$.

	\FloatBarrier
	\subsection{Numerical Example: Optimal Control of Linear Elasticity} \label{sec:num-exp}

	We test Algorithm~\ref{alg:PDPOC} at an optimal control problem in linear elasticity and also perform comparisons with MINRES. This class of problems is computationally challenging, since stiffness matrices are usually larger and more dense than encountered in scalar valued problems. For fine grids, direct solvers are hardly applicable and iterative methods have to be used. 
	
		\begin{figure}
			\centering
			\includegraphics[trim=0cm 0cm 0cm 0cm ,clip,scale = 0.065]{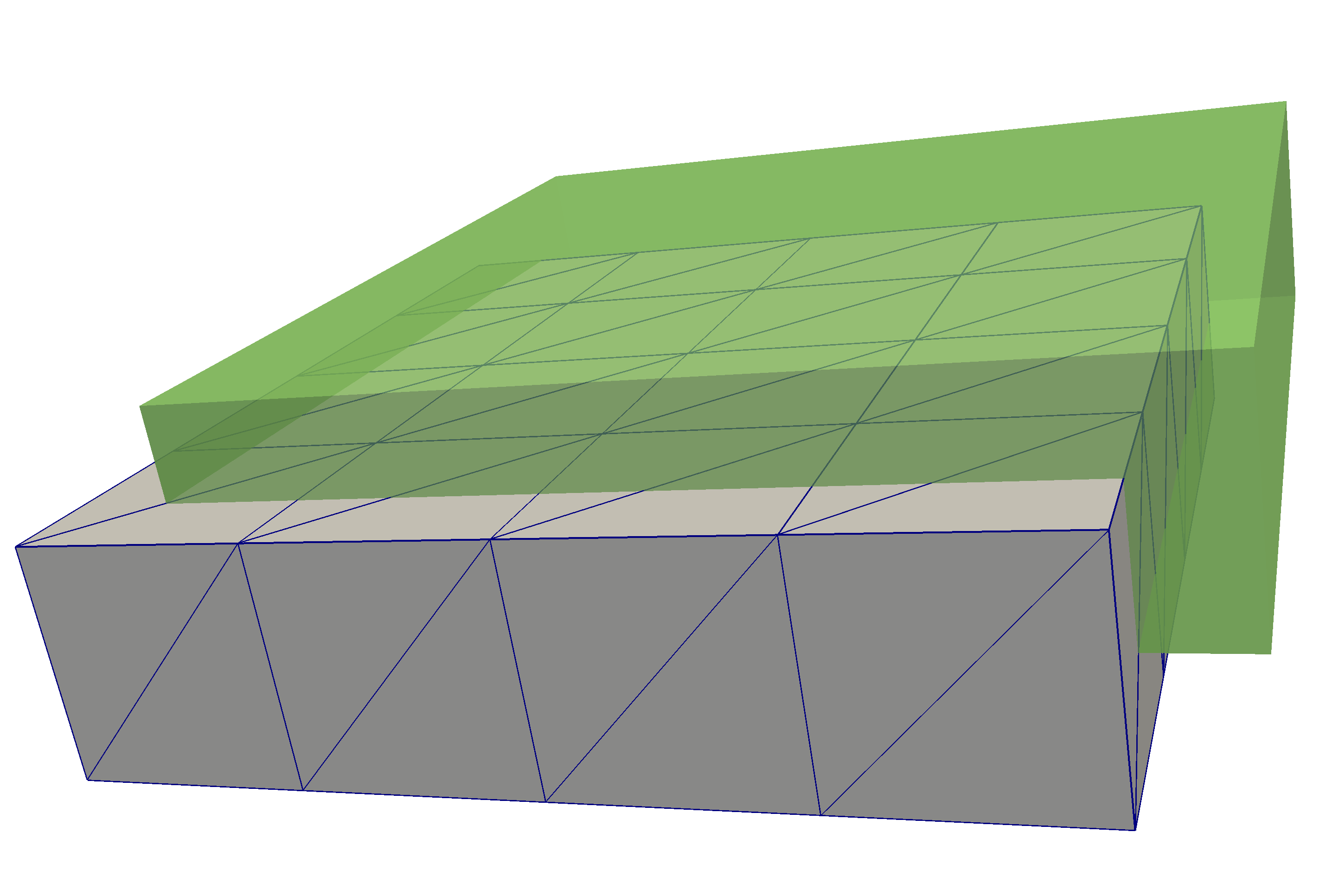} 
					
\caption{Initial coarse grid (grey) and desired deformation (green opaque). Homogeneous Dirichlet boundary conditions on the right face. Control acts as a traction force on the top face.
}
			\label{figureDesired}
		\end{figure}

	We consider the deformation of a three-dimensional body, represented by a cuboid domain $\Omega$ shown in Figure~\ref{figureDesired}.
	Its boundary $\Gamma$ consists of the two segments $\Gamma_D$ and $\Gamma_N$. $\Gamma_D$ denotes the Dirichlet boundary, where the body is clamped, and  $\Gamma_N$ represents the Neumann boundary. The control $u$ is applied as a traction force on a part of $\Gamma_N$, namely the top face. The remaining Neumann boundary conditions are homogenous. 
	For the function spaces, we choose the vector valued spaces $ Y = P = H^1(\Omega, \R^3)$ and $U = L^2(\Gamma_N,\R^3)$.
	Figure~\ref{figureDesired} also illustrates the coarse grid and the desired deformation $y_d$, a pure translation.

	The problem is discretized by linear finite elements, using various numbers of refinements of the coarse grid seen in Figure~\ref{figureDesired}, which also serves as the lowest level in our multiplicative multigrid preconditioner (MG in the following). This preconditioner is a single V-cycle with one pre- and post-smoothing step on each level. We use a $3\times 3$ block-diagonal Jacobi smoother matching the vector-valued structure of the problem. 
	This MG preconditioner yields a relative condition number $\kappa(Q_{\rm MG}^{-1}A) \approx 8.5$. 
	Alternatively, additive multigrid preconditioners such as BPX~\cite{BramblePasciakXu1990} can be used. BPX is computationally cheaper than MG, but yields larger condition numbers  $\kappa(Q_{\rm BPX}^{-1}A) \approx 100$ on fine grids.

		\begin{table}[h]\small
			\centering
			\begin{tabular} { l c c c c c c c}\toprule
			ref. & $0$ & $1$ & $2$ & $3$  & $4$  & $5$ & $6$ \\ \midrule
			dof & $471$ & $2469$ & $15\,681$ & $111\,225$  & $836\,841$  & $6\,490\,569$ & 51\,123\,081\\ 
			\midrule
							  	outer its. & 5& 5 & 6& 6 & 6 &  5 & 5\\
				 tot. MG  & 74 & 558 & 652&  727& 759 & 665 & 712\\
				 $\kappa(Q_{\rm MG}^{-1}A)$ & 1.0 & 5.2 & 7.3 & 8.2 & 8.5 & 8.5 & 8.5\\
							 \midrule 

				  	outer its. & 4& 5 & 6& 7 & 7 &  7 & 6\\
				 tot.  BPX  & 56 & 769 & 1427&  1970& 2278 & 2496 & 2750\\
				 $\kappa(Q_{\rm BPX}^{-1}A)$ & 1.0 & 17.6 & 40 & 60.7 & 79.5 & 97.4 & 115.1\\
				\bottomrule
			\end{tabular}
			\caption{Required outer iterations, preconditioner applications (V-cycle multigrid (MG) and BPX) for the PDP method for varying number of uniform refinements with $\nu=10^{-3}$, $\Lambda=10^{-2}$. In addition, estimates on condition numbers for $A$, achieved by MG and BPX are listed. 
			}
			\label{tabularIterations3}
		\end{table}
		
					\begin{table}[h]\small
			\centering
	\begin{tabular} { l c c c c c c c c c}\toprule
			$\Lambda$  & $0.3$ & $10^{-1}$  & $3\cdot 10^{-2}$ & $10^{-2}$ & $3\cdot 10^{-3}$ & $10^{-3}$ &  $3\cdot 10^{-4}$ & $10^{-4}$\\ \midrule
				  	outer its.  & 22 & 11 & 7 & 6 & 7& 4 & 3& 3\\
				  	\midrule
                 MG: $\tilde H^{-1}$   & 696 & 616 & 624 &  676 & 864 &  780 &812& 840\\
				 MG: $\Proj$  &   66 & 44 & 41 & 41 & 62 &  42 &35& 42\\
                 MG: $\Proj^*$   & 66 & 39 & 42  & 42 & 62 &  41 &36& 42\\
                 \midrule
				 tot.  MG   & 828 &  699 & 707& 759 & 988 & 863  &883 & 924\\
				\bottomrule
			\end{tabular}
			\caption{Number of iterations and application of MG required by PDP for varying inner accuracy $\Lambda_P=\Lambda_{P^*}=\Lambda_{\rm Cheb}=\Lambda_{\rm ppcg}=\Lambda$ for $\nu=10^{-3}$, nDoF=$836\,841$. In addition to total numbers, preconditioner application required for solving $\tilde Hz=s$ (by ppcg) and applying $\Proj$ and $\Proj^*$ (by cg) are listed.} 
			\label{tabularIterations1}
		\end{table}
	All presented algorithms were implemented in the C++ library Spacy\footnote{https://spacy-dev.github.io/Spacy/}.
	For assembly and multigrid preconditioning, the finite element library Kaskade7 \cite{kaskade,GoetschelSchielaWeiser2020} was used, which is based on the DUNE library \cite{Dune}.
	Since the control mass matrix $M_u$ is relatively small due to boundary control, we use $\tilde M_u = M_u$ and a factorization by the sparse linear solver UMFPACK~\cite{Umfpack}.
	Regarding linear elasticity, the library FunG~\cite{fung}  was applied to compute  derivatives via automatic differentiation. For all experiments we chose a total stopping criterion on the relative error  of size $\Lambda_{\rm PDP}=10^{-8}$ for PDP. From MINRES we demand a relative reduction of the preconditioned residual of $10^{-8}$.

		\begin{figure}[h]
			\centering
		\includegraphics[trim=0cm 0cm 0cm 0cm ,clip,width=0.32\textwidth]{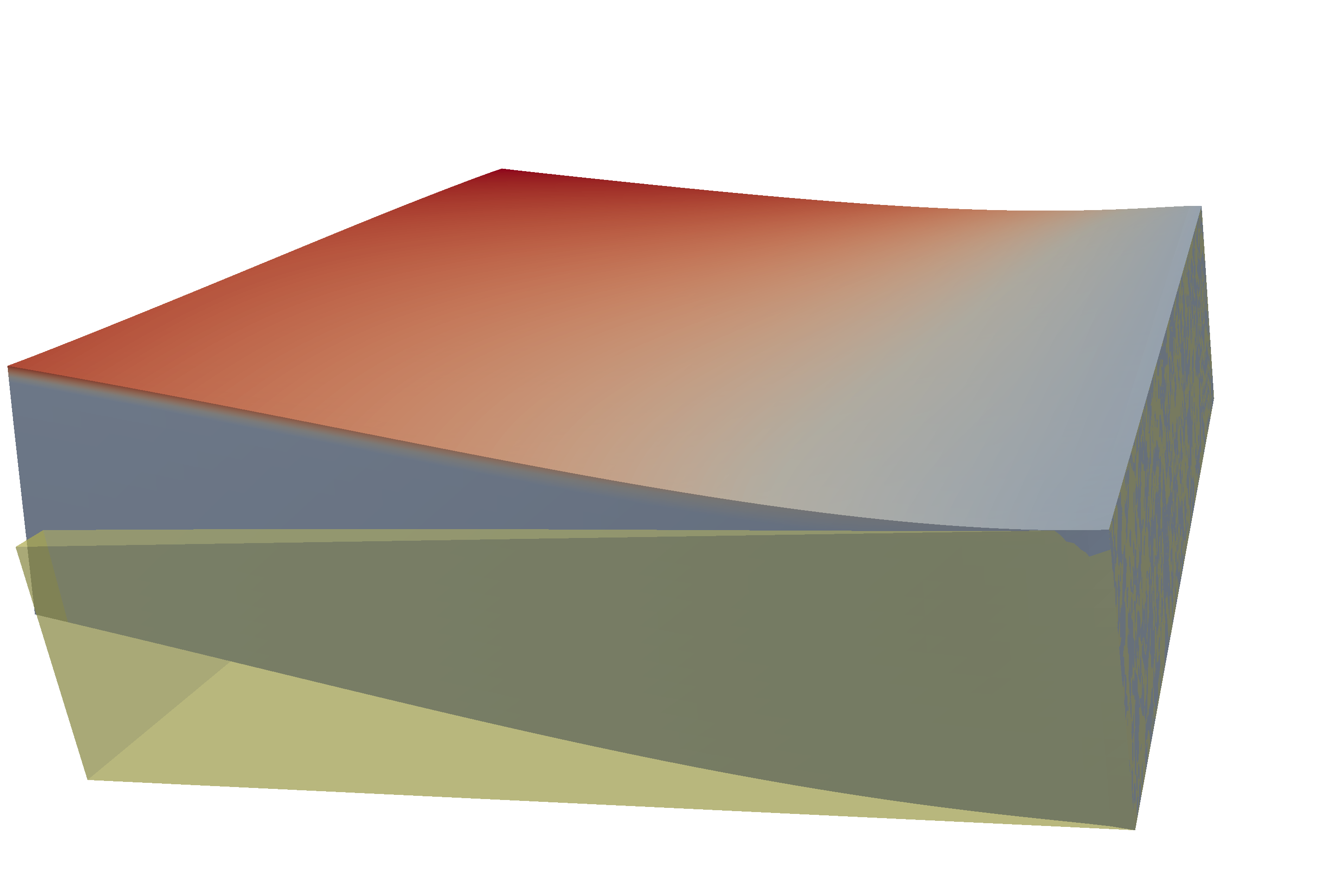}
					\includegraphics[trim=0cm 0cm 0cm 0cm
					,clip,width=0.32\textwidth]{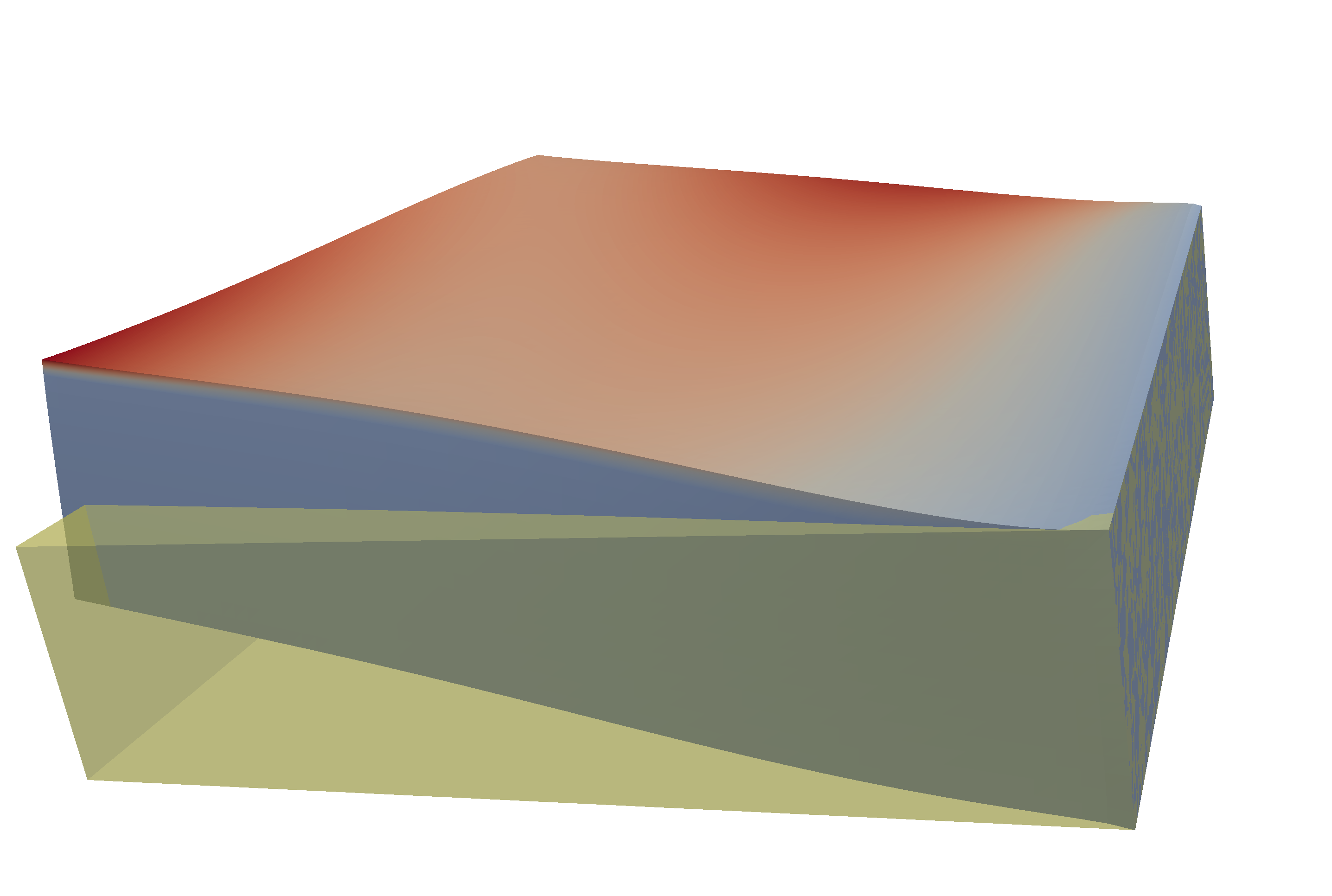} 
					\includegraphics[trim=0cm 0cm 0cm 0cm ,clip,width=0.32\textwidth]{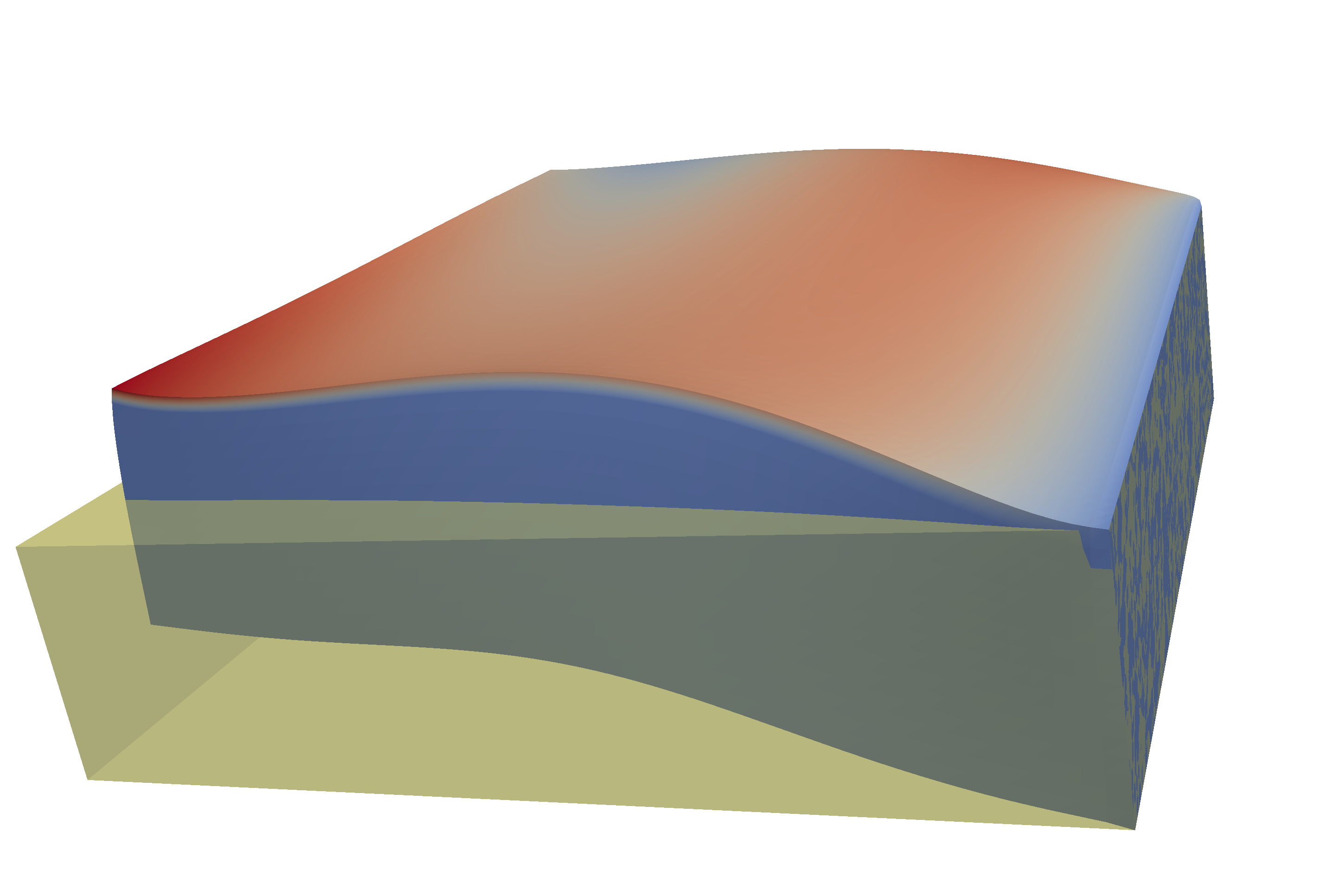} 
					\includegraphics[trim=0cm 0cm 0cm 0cm ,clip,width=0.32\textwidth]{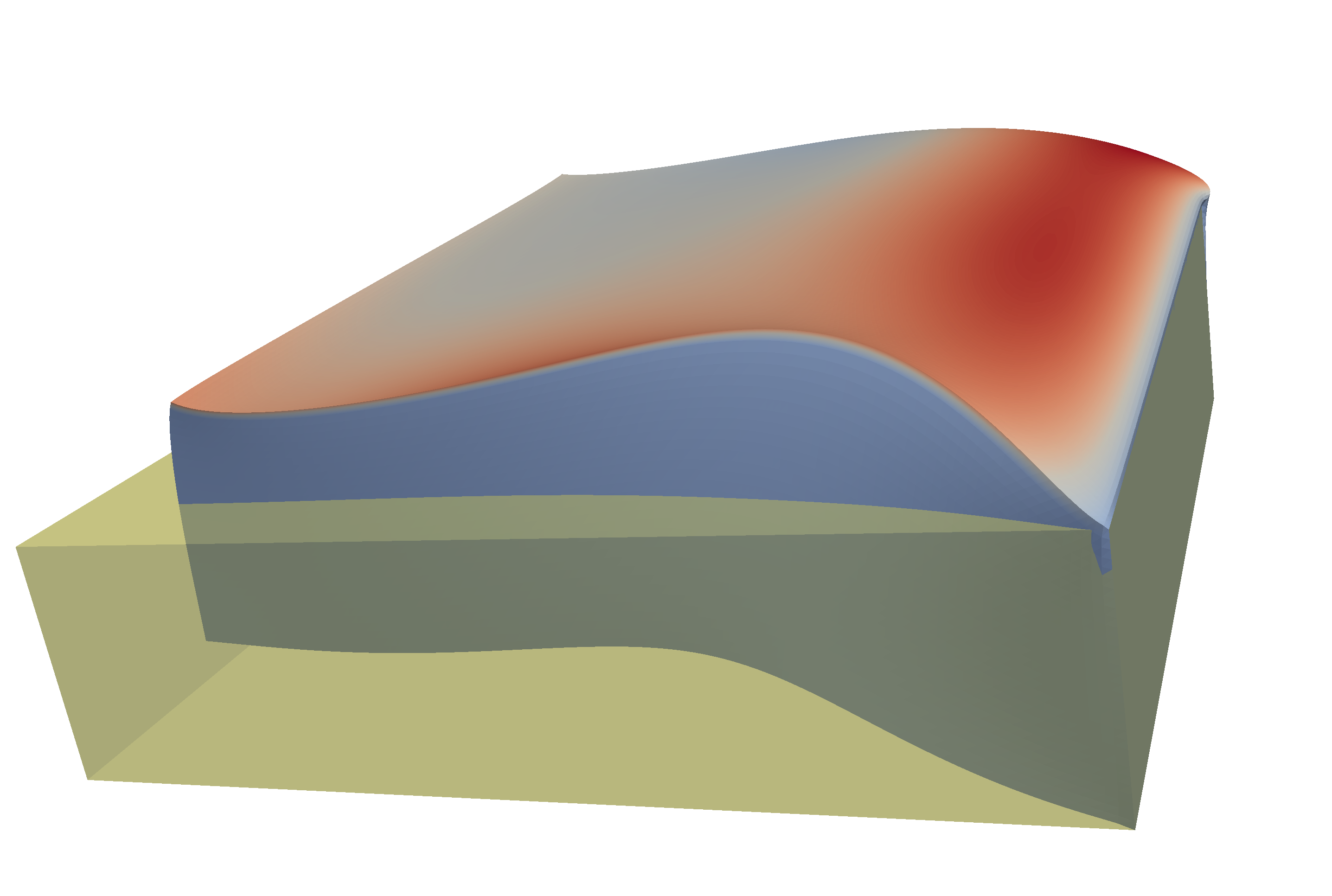} 
					\includegraphics[trim=0cm 0cm 0cm 0cm ,clip,width=0.32\textwidth]{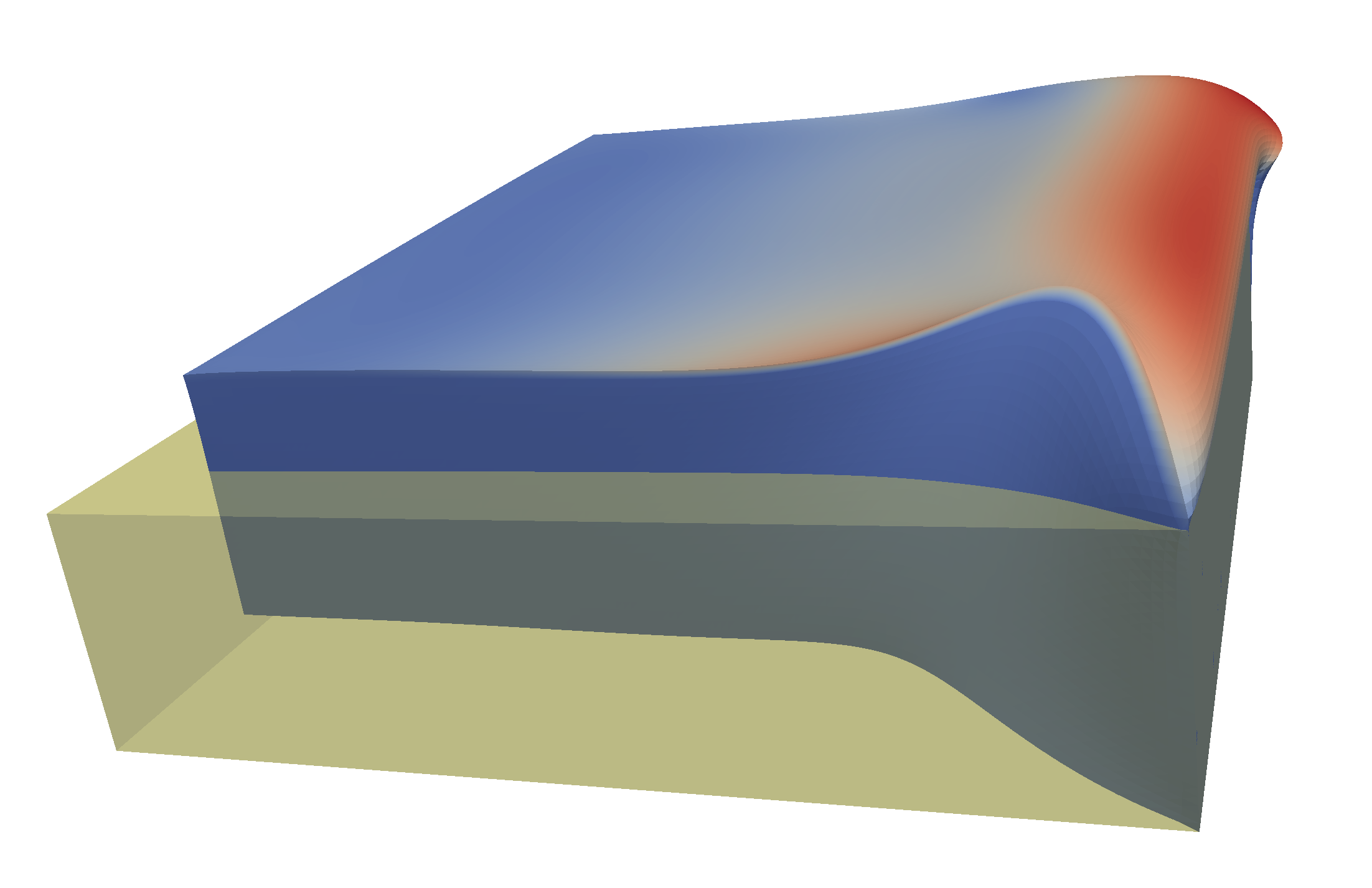} 
					\includegraphics[trim=0cm 0cm 0cm 0cm ,clip,width=0.32\textwidth]{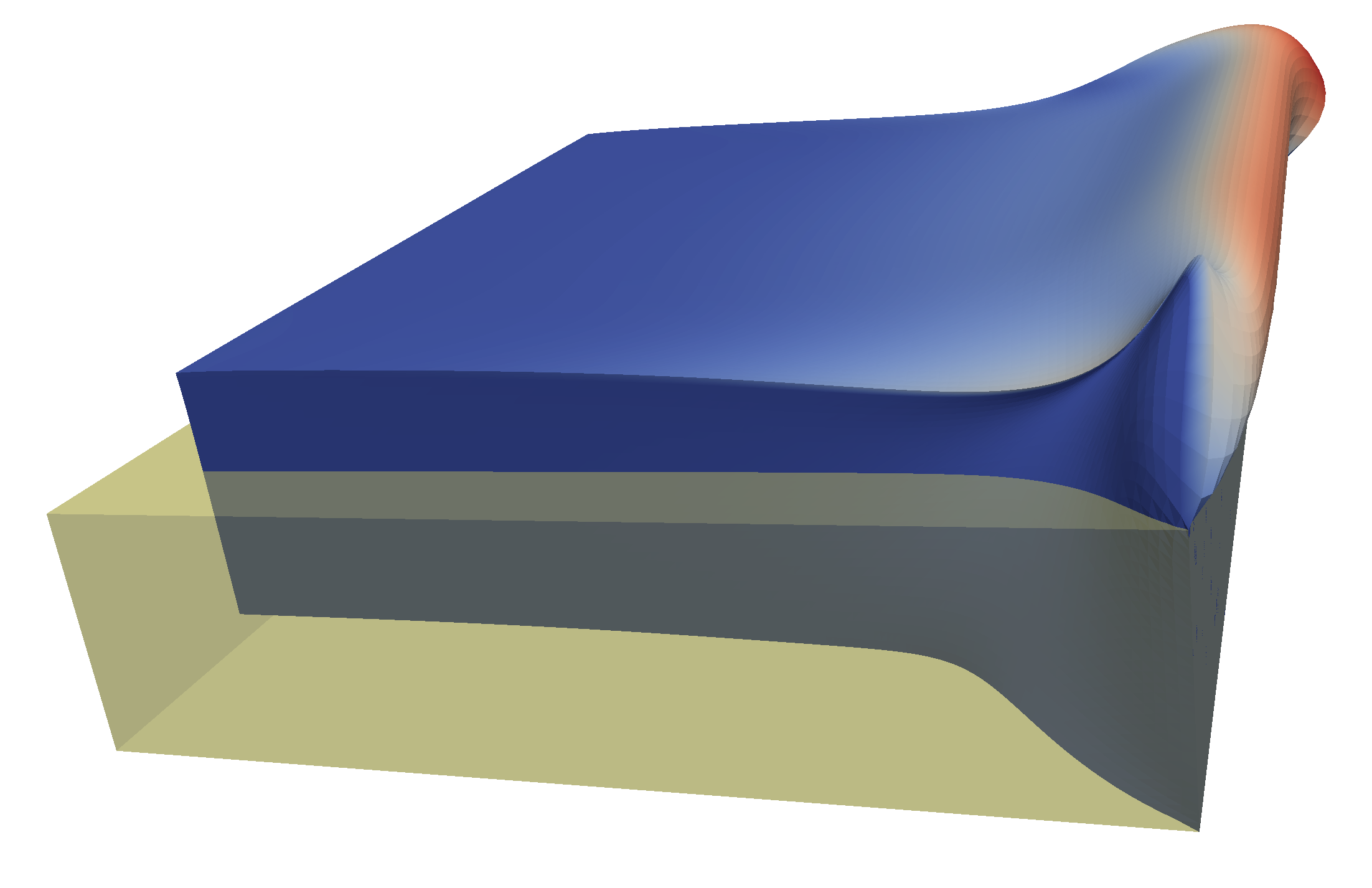} 
					
\caption{Optimal states for $\nu=10^{-k}$, $k=1\dots 6$ (top left to bottom right). Color reflects the relative intensity of the control force (individual scale for each plot). The desired shift (cf. Figure~\ref{figureDesired}) is achieved very well in the left part of the domain, while the Dirichlet boundary conditions on the right face prevent a perfect fit in the right part.   Clearly, the extreme deformations in the last pictures are no longer within the realm of linear elasticity. 
}
			\label{figureIterations}
		\end{figure}

In numerical experiments, profiling information indicates that sparse matrix multiplication with $A$, followed by application of MG or BPX as a preconditioner in Chebyshev and cg dominate the computational effort of PDP, taking around $80\%-90\%$ of the total solution time. The remaining time is spent on the application of $M$, $M_u^{-1}$, $B$, and basic linear algebra. A similar distribution of computational effort has been observed in MINRES. In the spd preconditioner $Q_2$ used for MINRES, application of $M_y^{-1}$ is realized by a diagonally preconditioned Chebyshev iteration (cf.~\cite{wathen2008}) and consumes about $3\%-4\%$ of the total computing time. A similar iteration could be applied to $M_u$ in the case of distributed control.

As can be seen in Table~\ref{tabularIterations3}, BPX requires roughly $3-4$ times more Chebyshev iterations than MG per application of $\tilde A^{-1}$, requiring about half of the computational effort per application. Hence, in total, MG is $1.5$ to $2$ times faster than BPX in our setting. While $Q_{\rm MG}$ shows the expected mesh-independent behaviour, we observe some dependence of the total number of BPX calls in $Q_{\rm BPX}$ on the size of the problem. The reason is that the relative condition number of $A$ preconditioned by BPX is not completely independent of the grid size, but rather seems to grow about an additive constant (around 20) for each refinement, at least for the relatively small number of uniform refinements used.

Next, we study the performance of Algorithm~\ref{alg:PDPOC} for different inner tolerances $\Lambda$. In Table~\ref{tabularIterations1} we compare the number of outer iterations, the number of applications of the MG preconditioner in ppcg (for the Chebyshev solver) and in PDP (for the projections), and the total number of MG calls required for different values of $\Lambda$. As expected, the number of outer iterations becomes smaller for more stringent tolerances. However, the total number of MG calls, while being slightly higher for very stringent inner tolerances, does not vary dramatically over a large range of tolerances. The variation in total number of MG applications is partially due to the discrete effect of varying the low number of outer PDP iterations.

Finally, we consider the performance of PDP with respect to varying Tychonov parameter $\nu$ in the range $10^{-1}$ to $10^{-6}$ (see Figure~\ref{figureIterations} and Table~\ref{tabularIterations2}). We also perform a comparison with MINRES, preconditioned by $Q_1$ and $Q_2$ as defined in~\eqref{eq:minres-preco}, where application of $\tilde A^{-1}$ is, just as in PDP, achieved by MG-preconditioned Chebyshev iterations. We observe throughout that PDP requires a comparable but slightly lower number of preconditioner applications, compared to MINRES. This number grows by a factor of $2-3$, if $\nu$ is reduced by a factor of $10$. For small $\nu$, more stringent inner tolerances seem to be more efficient, in accordance with Proposition~\ref{pro:cond_optctrl}. A moderate tolerance of $\Lambda=10^{-2}$ shows reasonable behaviour over a large range on $\nu$. MINRES$_{Q_1}$ seems to be very sensitive against small values of $\nu$. For $\nu \ge 10^{-5}$ the number of MG applications exceeded by far $100\,000$. However, for moderate $\nu$, MINRES$_{Q_1}$ can be used efficiently with low Chebyshev accuracy, or even with a single MG application for $\tilde A$. In contrast, MINRES$_{Q_2}$ is rather sensitive against inaccurate application of $\tilde A^{-1}$ but more efficient than MINRES$_{Q_1}$ for accurate $\tilde A^{-1}$,  and also more robust with respect to small $\nu$. 

		\begin{table}[h]\small
			\centering
			
			\begin{tabular} { c c c c c c c c }\toprule
			& $\Lambda \backslash \nu$ & $10^{-1}$ & $10^{-2}$ & $10^{-3}$ & $10^{-4}$ & $10^{-5}$ & $10^{-6}$   \\ \midrule
                PDP &$10^{-1}$ & 8(266) & 9(411) & 11(699) & 17(1232) & 26(3704) &  33(8768)   \\
		         $Q_{\rm MG}$ &$10^{-2}$  & 4(320) & 4(398) & 6(759) & 8(1355) & 8(2580) & 10(6427) \\
                 &$10^{-3}$  & 3(381) & 3(445) & 4(863)  & 7(2055)& 7(3473) & 6(8323) \\
				\midrule
                      MINRES &$1$ & 217(434) & 575(1150) & 2851(5702) & 18k(35k) & x & x   \\
                     $Q_{1,\rm MG}$  &$10^{-1}$ & 59(590) & 199(1990) & 1020(10k) & 5981(60k) & x & x   \\
				  & $10^{-2}$  & 52(832) & 184(2944) & 926(15k) & 5262(84k) & x & x \\
               \midrule
                     MINRES &$10^{-1}$ & 875(8750) & 700(7000) & 557(5570) & 500(5000) & 599(5990) & 1208(12k)   \\
				 $Q_{2,\rm MG}$ & $10^{-2}$  & 77(1232) & 74(1184) & 77(1232) & 108(1728) & 209(3344) & 528(8448) \\
                        &     $10^{-3}$  & 27(594) & 32(704)& 47(1034) & 93(2046)  & 176(3872) & 438(9636) \\
                \bottomrule
			\end{tabular}
			\caption{Required iterations for the PDP method for varying Tychonov parameter $\nu$, with varying choices of $\Lambda$ and a grid with 4 refinements. The entries in the table are of the format: outer iterations(total calls $Q^{-1}$). For PDP, $\Lambda$ denotes a common inner tolerance, while for MINRES it denotes the requested Chebyshev accuracy of $\tilde A^{-1}$. $\Lambda =1$ yields one MG V-cycle per application of $\tilde A^{-1}$. }
			\label{tabularIterations2}
		\end{table}
		
\section{Conclusion and Outlook}

In summary, PDP offers competitive efficiency and enjoys a couple of desirable qualitative properties, making it an attractive new option as an iterative solver for large scale equality constrained optimization problems. In particular, it may be used as an inner solver inside SQP methods for the solution of nonlinear equality constrained variational and optimal control problems. The design of a strategy that governs the accuracy requirements of the inner solver efficiently and handles local non-convexity is subject to current work. Also, the application of PDP to time-dependent problems, using a hierarchy of spatial and temporal grids (cf. e.g. \cite{Vexler2011}), is a promising topic of future research. Beyond the scope of linear algebra, it is attractive to combine the current method with an adaptive grid refinement procedure, starting from a coarse grid and refining during the primal projection steps. This would yield an adaptive multilevel method in function space. 

More broadly, the general PDP strategy of solving an inexpensive problem on a surrogate subspace and correcting via primal and dual projections can be extended beyond the realm of PDE constrained optimization and may become a useful tool also in other contexts.

	\bibliographystyle{abbrv}
	\bibliography{mybib}
\end{document}